\newcommand{\CM}{Cohen-Macaulay}
\newcommand{\wrt}{with respect to}
\newcommand{\I}{\mathbb{I} }
\newcommand{\n}{\mathfrak{n} }
\newcommand{\m}{\mathfrak{m} }
\newcommand{\Z}{\mathbb{Z} }
\newcommand{\W}{\mathbf{W}^{\bullet} }
\newcommand{\Y}{\mathbf{Y}_{\bullet} }
\newcommand{\Pb}{\mathbf{P}_{\bullet} }
\newcommand{\K}{\mathbb{K}_{\bullet} }
\newcommand{\Kc}{\mathbb{K}^{\bullet} }
\newcommand{\C}{\mathbf{C} }
\newcommand{\D}{\mathbf{D} }
\newcommand{\T}{\mathbf{T}^{\bullet}  }
\newcommand{\E}{\mathbf{E}^{\bullet}  }
\newcommand{\F}{\mathbf{F}_{\bullet}  }
\newcommand{\bx}{\mathbf{x}}
\newcommand{\bu}{\mathbf{u}}
\newcommand{\by}{\mathbf{y}}
\newcommand{\fb}{\mathbf{f}}
\newcommand{\rt}{\rightarrow}
\newcommand{\wh}{\widehat }
\newcommand{\image}{\operatorname{image}}
\newcommand{\coker}{\operatorname{coker}}
\newcommand{\cone}{\operatorname{cone}}
\newcommand{\grade}{\operatorname{grade}}
\newcommand{\depth}{\operatorname{depth}}
\newcommand{\projdim}{\operatorname{projdim}}
\newcommand{\injdim}{\operatorname{injdim}}
\newcommand{\ann}{\operatorname{ann}}
\newcommand{\cx}{\operatorname{cx}}
\newcommand{\Proj}{\operatorname{Proj}}
\newcommand{\cmd}{\operatorname{cmd}}
\newcommand{\Inj}{\operatorname{Inj}}
\newcommand{\Tot}{\operatorname{Tot}}
\newcommand{\embdim}{\operatorname{embdim}}
\newcommand{\rank}{\operatorname{rank}}
\newcommand{\Syz}{\operatorname{Syz}}
\newcommand{\Hom}{\operatorname{Hom}}
\newcommand{\Ext}{\operatorname{Ext}}
\newcommand{\Tor}{\operatorname{Tor}}
\theoremstyle{plain}
\newtheorem{theorem}{Theorem}[section]
\newtheorem{corollary}[theorem]{Corollary}
\newtheorem{lemma}[theorem]{Lemma}
\newtheorem{proposition}[theorem]{Proposition}
\theoremstyle{definition}
\newtheorem{remark}[theorem]{Remark}
\newtheorem{example}[theorem]{Example}
\theoremstyle{remark}
\begin{document}

\title[Resolutions of Koszul complexes]{Resolutions of Koszul complexes and applications}
\author{Tony~J.~Puthenpurakal}
\date{\today}
\address{Department of Mathematics, IIT Bombay, Powai, Mumbai 400 076}

\email{tputhen@math.iitb.ac.in}
\subjclass{Primary 13D02 }
\keywords{Koszul homology modules, spectral sequences}

 \begin{abstract}
In this paper we consider projective and injective resolutions of Koszul complexes and give several applications to the study of Koszul homology modules.
\end{abstract}
 \maketitle
\section{introduction}
The Koszul complex is a fundamental construction in commutative algebra.
W. Vasconcelos writes in his book ``Integral closures'' \cite[page 280]{VasBook-05}:
\begin{quote}
"While the vanishing of the homology of a Koszul complex $K(\bx, M)$ is easy to track, the module theoretic properties
of its homology, with the exception of the ends, is difficult to fathom. For instance, just trying to see
whether a prime is associated to some  $H_i(\bx, M)$  can be very hard."
\end{quote}
The purpose of this paper is to enhance our knowledge about Koszul homology by establishing the following results:

Let $(A,\m)$ be a commutative Noetherian local ring, let $I$ be an ideal in $A$ and let $M$ be a finitely generated $A$-module. Let $I$ be generated minimally by $\bu = u_1, \ldots, u_m$.
Denote by
$ \K(\bu, M)$  the Koszul complex associated to $\bu$ with coefficients in $M$. Set $H_i(\bu, M)$ the $i^{th}$ Koszul homology module of $M$ \wrt\ $\bu$.
If $M = R$ then we set $H_i(\bu) = H_i(\bu, R)$. As $\bu$ is a minimal generating set  of $I$ then  it is easily seen that $H_i(\bu, M)$ is an invariant of $I$ and $M$. In this case set $H_i(I, M) = H_i(\bu, M)$ and $H_i(I) = H_i(\bu)$. If $I = \m$ then set $H_i(A) = H_i(\m)$. It is also convenient to consider Koszul cohomology: let $\Kc(\bu, M)$ be the Koszul co-chain complex associated to $\bu$ with coefficients in $M$. We may consider Koszul cohomology modules $H^i(I, M)$.

Next we state the results proved in this paper.

 \textbf{I:} Let $(A,\m)$ be a Noetherian  local ring with residue field $k$. Then $H_1(A) \cong k $ if and only if $A$ is a hypersurface ring, i.e., the completion $\wh{A} = Q/(f)$ where $(Q, \n)$ is regular local and $f \in \n^2$; see \cite[2.3.2]{BH}. Recall a local ring $A$ is said to be analytically un-ramified if the completion $\wh{A}$ is reduced. Our first result is
\begin{theorem}\label{hyp}
Let $(A,\m)$ be an analytically un-ramified \CM \ local ring with embedding dimension $e$ and dimension $d$.  Assume $A$ is not regular. Then the following assertions are equivalent:
\begin{enumerate}[\rm (i)]
  \item $A$ is a hypersurface ring.
  \item $H_{e - d - 1}(A) \cong k$.
\end{enumerate}
\end{theorem}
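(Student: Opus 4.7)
The implication (i) $\Rightarrow$ (ii) is immediate: if $A$ is a hypersurface then $e - d = 1$, and $H_0(A) \cong k$ always holds. The content lies in (ii) $\Rightarrow$ (i). Setting $c = e - d$ and assuming $H_{c-1}(A) \cong k$, the goal is to force $c = 1$.

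My plan is to translate Koszul homology into $\Tor$ over a Cohen presentation, and then exploit the duality between the minimal $Q$-free resolutions of $A$ and of $\omega_A$. Each $H_i(\m, A)$ is $\m$-torsion, so completion does not affect (ii), nor does it alter $e$, $d$, or the Cohen-Macaulay property. I would therefore assume $A$ is complete and, by Cohen's structure theorem, write $A = Q/J$ with $(Q, \n)$ regular local of dimension $e$ and $J \subseteq \n^2$. Lifting a minimal generating set $\bu$ of $\m$ to a minimal generating set $\by$ of $\n$, the Koszul complex $\K(\by, Q)$ resolves $k$ over $Q$, so
\[
 H_i(\bu, A) \cong \Tor_i^Q(k, A) \cong k^{\beta_i^Q(A)},
\]
and (ii) becomes $\beta_{c-1}^Q(A) = 1$.

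The key step is that, since $A$ is Cohen-Macaulay of codimension $c$ in $Q$, one has $\projdim_Q A = c$, and $\Ext^i_Q(A, Q) = 0$ for $i \neq c$ while $\Ext^c_Q(A, Q) \cong \omega_A$. Dualizing the minimal $Q$-free resolution of $A$ therefore yields, after reindexing, a minimal $Q$-free resolution of $\omega_A$ of length $c$, with $\beta_j^Q(\omega_A) = \beta_{c-j}^Q(A)$. In particular $\beta_1^Q(\omega_A) = 1$, so the first $Q$-syzygy $\Omega_1(\omega_A)$ is cyclic. The finishing move uses that $Q$ is a domain: $\Omega_1(\omega_A)$ embeds in a free $Q$-module, hence is torsion-free, and being cyclic as well, it must be free of rank one. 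This forces $\projdim_Q \omega_A \leq 1$; but $\omega_A$ is maximal Cohen-Macaulay over $A$, so $\depth_Q \omega_A = d$, and Auslander-Buchsbaum gives $\projdim_Q \omega_A = e - d = c$. Thus $c \leq 1$, and since $A$ is not regular $c = 1$, i.e., $A$ is a hypersurface.

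The delicate step is the duality---verifying that dualizing a minimal free resolution of a Cohen-Macaulay module of maximal codimension again produces a minimal free resolution, now of its canonical module---but this is essentially bookkeeping once one knows that $\Ext_Q^i(A, Q)$ is concentrated in a single degree. The rest is routine. I note in passing that the analytically unramified hypothesis does not appear to be used in this strategy; presumably the author's preferred proof enters by a different route that motivates its inclusion.
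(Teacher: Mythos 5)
Your proof is correct and takes a genuinely different route from the paper. You work entirely over a minimal Cohen presentation $A=Q/J$, identify $H_i(\bu,A)$ with $\Tor_i^Q(k,A)$, and then exploit that for a Cohen-Macaulay quotient of codimension $c$ the dual of the minimal $Q$-free resolution of $A$ is the minimal $Q$-free resolution of $\omega_A=\Ext^c_Q(A,Q)$, so $\beta_j^Q(\omega_A)=\beta_{c-j}^Q(A)$; the hypothesis then forces the first syzygy of $\omega_A$ over $Q$ to be cyclic, hence free (torsion-free over the domain $Q$), giving $\projdim_Q\omega_A\le 1$ and therefore $c\le 1$. The paper instead first proves that $A$ is Gorenstein by a different chain of ideas: it uses the spectral sequence $\Ext^p_A(k,H^q(\bu,A))\Rightarrow V^{p+q}(\bu,k,A)$ to bound the Bass number $\mu_{d+1}(\m,A)$ by $\dim_k H^{d+1}(\m,A)=1$, combines this with the nonvanishing of Bass numbers above the depth to get $\mu_{d+1}(\m,A)=1$, then passes to the Matlis dual of the local cohomology portion of the minimal injective resolution and derives a contradiction using that $\omega$ has rank one; finally it invokes Poincar\'e duality of the Koszul algebra $H_*(A)$ to conclude $H_1(A)\cong k$. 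Your argument is shorter, more elementary (no spectral sequences, no injective resolutions, no Poincar\'e algebra), and, as you correctly observe, never uses the analytically unramified hypothesis; the paper needs that hypothesis precisely so that $\omega$ has a rank, a step your approach replaces by torsion-freeness of syzygies over the regular ring $Q$. So your proof in fact establishes the theorem for every non-regular Cohen-Macaulay local ring, strictly generalizing the stated result.
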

We note that if $A$ is Gorenstein then the above results follow from the fact that $H_*(A)$ is a Poincare algebra, \cite[3.4.5]{BH}. To prove (ii) $\implies$ (i) we first show that $A$ is necessarily Gorenstein.

\textbf{II:} Recall $I = (\bu)$ is said to be an ideal of definition of $M$ if $\ell(M/IM)$ is finite. In this case $\ell(H_i(\bu, M)) $ is finite for all $i$. We may consider the Euler characteristic $\chi(\bu, M) = \sum_{i \geq 0}(-1)^i\ell(H_i(\bu, M))$. Serre proved that $\chi(\bu, M) \geq 0$ and is non-zero if and only if $\bu$ is a system of parameters of $M$, see \cite[4.7.6]{BH}. We prove
\begin{theorem}
\label{chi} Let $(A,\m)$ be a regular local ring and let $M$ be a finitely generated $A$-module. Let $\bu = u_1, \ldots, u_m$ and set $I = (\bu)$. Assume
$\ell(M\otimes A/I)$ is finite and $\dim M + \dim A/I < \dim A$. Then
\begin{enumerate}[\rm (1)]
  \item $\chi(\bu, M)  = 0$.
  \item $m > \dim M$.
\end{enumerate}
\end{theorem}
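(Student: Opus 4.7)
My plan is to handle (2) first by a one-line dimension count, and then to deduce (1) by a hyperhomology spectral sequence coming from a projective resolution of the Koszul complex---the kind of construction that is the main engine of this paper.

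For (2), since $I=(\bu)$ is generated by $m$ elements, Krull's height theorem gives $\dim A/I \ge \dim A - m$. Substituting into the hypothesis $\dim M + \dim A/I < \dim A$ immediately yields $\dim M < m$.

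For (1), view $\K(\bu)$ as a bounded complex of finite free $A$-modules; its homologies $H_q(\bu)$ are annihilated by $I$. Applying $-\otimes_A M$ after taking a projective (Cartan--Eilenberg) resolution of $\K(\bu)$ produces a first-quadrant convergent spectral sequence
$$E^2_{p,q} \;=\; \Tor_p^A(H_q(\bu),M) \;\Longrightarrow\; H_{p+q}(\bu,M).$$
Every $E^2_{p,q}$ is supported inside $V(I)\cap\operatorname{Supp}M\subseteq\{\m\}$ by the finite length hypothesis on $M\otimes A/I$, so each has finite length; regularity of $A$ forces $\projdim_A H_q(\bu)<\infty$, so the Serre intersection multiplicity
$$\chi^A(H_q(\bu),M) \;=\; \sum_{p\ge 0}(-1)^p\ell\bigl(\Tor_p^A(H_q(\bu),M)\bigr)$$
is a well-defined finite alternating sum. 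The standard Euler-characteristic identity for a bounded spectral sequence then gives
$$\chi(\bu,M) \;=\; \sum_{q\ge 0}(-1)^q\,\chi^A(H_q(\bu),M).$$

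Finally, $\dim H_q(\bu) \le \dim A/I$ for every $q$, so the assumption $\dim M + \dim A/I < \dim A$ yields $\dim M + \dim H_q(\bu) < \dim A$. The Roberts / Gillet--Soul\'e vanishing theorem for Serre intersection multiplicities over a regular local ring then forces $\chi^A(H_q(\bu),M)=0$ for every $q$, and hence $\chi(\bu,M)=0$. The main technical obstacle is setting up the spectral sequence cleanly and justifying term-wise finite length; once (2) is in hand one could alternatively bypass Roberts/Gillet--Soul\'e and simply invoke Serre's positivity theorem \cite[4.7.6]{BH} directly, since $m>\dim M$ means $\bu$ is not a system of parameters for $M$, giving $\chi(\bu,M)=0$ at once---but the spectral-sequence route is the one consistent with the methods developed in the rest of the paper.
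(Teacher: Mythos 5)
Your proof is correct, and for part (1) it is essentially the same as the paper's: both use the homology spectral sequence $E^2_{pq}=\Tor_p^A(H_q(\bu),M)\Rightarrow H_{p+q}(\bu,M)$ (the paper's third spectral sequence, Proposition~\ref{third-ss}, applied with $\Pb=\K(\bu,A)$ as its own resolution), together with the Roberts/Gillet--Soul\'e vanishing theorem applied term-by-term to $\chi^A(H_q(\bu),M)$ and the preservation of Euler characteristics along the spectral sequence. The only genuine difference is in how part (2) is handled: the paper observes that (1) and (2) are \emph{equivalent} via Serre's positivity theorem (since $\ell(M/IM)<\infty$ forces $m\ge\dim M$, and $\chi(\bu,M)=0$ iff $\bu$ is not a system of parameters for $M$, i.e.\ iff $m>\dim M$), and therefore only proves (1); you instead give a direct one-line proof of (2) from Krull's height theorem ($\dim A/I\ge \dim A - m$, so $\dim M<\dim A-\dim A/I\le m$), independent of (1). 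Your direct argument for (2) is cleaner and highlights, as you note at the end, that once (2) is in hand one could bypass the spectral sequence entirely and deduce (1) from Serre's positivity theorem --- which is precisely the equivalence the paper invokes, run in the opposite direction. Both routes are valid; the paper's presentation emphasizes the spectral-sequence machinery that drives the rest of the paper, while yours makes the logical dependencies more transparent.
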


\textbf{III:} The total Koszul homology of $M$, i.e., $H_*(I, M)$ is a module over the algebra $H_*(I)$. So it is natural to expect that good properties of $H_*(I)$ will impose good properties of $H_*(I, M)$ under suitable conditions. Perfection of $H_i(I)$ has strong consequences. We prove

\begin{theorem}\label{perfect}
 Let $(A,\m)$ be a \CM  \ local ring and let $I$ be an ideal of $A$. Assume that $H_i(I)$ are perfect $A$-modules for $0 \leq i \leq r-1$ (whenever $H_i(I) \neq 0$).
 Let $M$ be a maximal \CM \ $A$-module. Then $\Ext^g(H_r(I), M) \cong H^{g+r}(I, M)$.  Furthermore if $A$ is Gorenstein and $H_i(I)$ is perfect for all $i$ (whenever $H_i(I) \neq 0$) then
 $H^i(I, M) \cong \Hom_A(M^*, H^i(I))$ for all $i$. In particular $H_i(I, M)$ is $S_2$ as an $A/I$-module.
\end{theorem}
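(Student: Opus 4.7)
The plan is to derive all three statements from the hyperderived spectral sequence obtained by resolving the Koszul complex. Take a Cartan--Eilenberg projective resolution $\Pb\to \K(\bu)$ and consider the bicomplex $\Hom(\Pb,M)$. One filtration trivially yields total cohomology $H^{p+q}(I,M)$ (since $\K(\bu)$ is itself a bounded complex of free $A$-modules, so $\Pb\to\K(\bu)$ is a quasi-isomorphism); the other yields
\[
E_2^{p,q}\;=\;\Ext^p_A(H_q(I),M)\;\Longrightarrow\;H^{p+q}(I,M).
\]
For Part 1, I would use two vanishing principles. First, $H_q(I)$ is annihilated by $I$ of grade $g:=\grade I$, and since $M$ is MCM over the CM ring $A$ every $A$-regular sequence is $M$-regular, whence $\Ext^p_A(H_q(I),M)=0$ for $p<g$. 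Second, for $0\leq q\leq r-1$ perfectness of $H_q(I)$ (with grade equal to $g$, i.e.\ $H_q(I)$ MCM over $A/I$) furnishes a finite free resolution of length $g$, so $\Ext^p_A(H_q(I),M)=0$ for $p>g$. On the antidiagonal $p+q=g+r$ only $(g,r)$ survives: when $q<r$ the relevant $p=g+(r-q)>g$ is killed by the second vanishing, and when $q>r$ the relevant $p=g-(q-r)<g$ is killed by the first. The same two vanishings kill every differential $d_s$ entering or leaving $(g,r)$ for $s\geq 2$, so $E_\infty^{g,r}=E_2^{g,r}$ and the filtration on $H^{g+r}(I,M)$ is trivial, giving $\Ext^g_A(H_r(I),M)\cong H^{g+r}(I,M)$.

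For Part 2, apply Part 1 for every $r$. If $N=H_r(I)$ is perfect of grade $g$ with finite free resolution $F_\bullet$, then right-exactness of $-\otimes M$ gives $\Ext^g_A(N,M)=\coker(F_{g-1}^*\otimes M\to F_g^*\otimes M)=\Ext^g_A(N,A)\otimes M$; taking $M=A$ in Part 1 identifies $\Ext^g_A(H_r(I),A)$ with $H^{g+r}(I)$, so $H^{g+r}(I,M)\cong H^{g+r}(I)\otimes M$. The conclusion then reduces to the following key lemma: \emph{if $A$ is Gorenstein, $M$ is MCM, and $P$ is a finitely generated $A$-module of finite projective dimension, then the natural map $M\otimes_A P\to \Hom_A(M^*,P)$ is an isomorphism.} To prove it, take a free resolution $F_\bullet\to M^*$; dualizing gives an exact sequence $0\to M\to F_0^*\to F_1^*\to\cdots$, using $M^{**}=M$ and $\Ext^i_A(M^*,A)=0$ for $i>0$ (both valid for $M$ MCM over Gorenstein). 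Setting $K_j=\image(F_j^*\to F_{j+1}^*)$, iterated dimension-shifting in the short exact sequences $0\to K_l\to F_{l+1}^*\to K_{l+1}\to 0$ yields $\Tor^A_1(K_0,P)=\Tor^A_{d+1}(K_d,P)=0$ since $\projdim P\leq d$; similarly $\Tor^A_1(K_1,P)=0$. Hence $0\to M\otimes P\to F_0^*\otimes P\to F_1^*\otimes P$ remains exact after tensoring, and since $\Hom_A(F_i,P)\cong F_i^*\otimes P$ naturally, this kernel identifies $\Hom_A(M^*,P)$ with $M\otimes P$. By Koszul self-duality $H^i(I)\cong H_{m-i}(I)$ is again perfect, so the lemma applies with $P=H^i(I)$, completing Part 2.

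For the $S_2$ statement, Koszul self-duality $H_i(I,M)\cong H^{m-i}(I,M)$ combined with Part 2 and $H^{m-i}(I)\cong H_i(I)$ gives $H_i(I,M)\cong \Hom_A(M^*,H_i(I))\cong \Hom_{A/I}(M^*/IM^*,H_i(I))$. Now $H_i(I)$ is MCM over $A/I$ (perfect of grade $g$ over the Gorenstein ring $A$ yields $\depth_A H_i(I)=\dim A - g = \dim A/I$ by Auslander--Buchsbaum), and a presentation $(A/I)^a\to (A/I)^b\to M^*/IM^*\to 0$ realizes $\Hom_{A/I}(M^*/IM^*,H_i(I))$ as the kernel of a map $H_i(I)^b\to H_i(I)^a$ between MCM $A/I$-modules; the depth lemma over the CM local ring $A/I$ then forces this kernel to have depth $\geq 2$ at primes of height $\geq 2$, which is the $S_2$ property. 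The main obstacle is the key lemma: $M^*$ need not have finite projective dimension over $A$, so one must work with an infinite free resolution, and it is the finite projective dimension of $P$ that makes the dimension-shift argument terminate.
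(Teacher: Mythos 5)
Your Part 1 coincides with the paper's argument: both run the spectral sequence $\Ext^p_A(H_q(I),M)\Rightarrow H^{p+q}(I,M)$, obtained in the paper as $E_2^{p,q}\Rightarrow U^{p+q}(\bu,A,M)=H^{p+q}(\bu,M)$ because $\K(\bu)$ is its own projective resolution, and both exploit the same two vanishing ranges ($p<g$ from $M$ being maximal \CM\ and $I\subseteq\ann H_q(I)$; $p>g$ for $q\leq r-1$ from perfectness). For Part 2 you take a genuinely different route. The paper's Gorenstein argument uses its injective-resolution machinery: the spectral sequence $\Ext^p_A(M,H^q(\bu))\Rightarrow V^{p+q}(\bu,M,A)$ collapses to $V^j(\bu,M,A)\cong\Hom_A(M,H^j(I))$ (Lemma \ref{p-gor}), while Lemma \ref{vext} (valid whenever $\Ext^{>0}_A(M,N)=0$) gives $V^j(\bu,M,A)\cong H^j(\bu,M^*)$; equating and replacing $M$ by $M^*$ finishes. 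You instead bootstrap Part 1: perfectness plus right-exactness of $-\otimes M$ identifies $\Ext^g_A(H_r(I),M)$ with $\Ext^g_A(H_r(I),A)\otimes M = H^{g+r}(I)\otimes M$, reducing the claim to the interchange isomorphism $M\otimes_A P\cong\Hom_A(M^*,P)$ for $M$ maximal \CM\ over Gorenstein $A$ and $P$ of finite projective dimension, which you establish by dualizing a free resolution of $M^*$ into a coresolution $0\to M\to F_0^*\to F_1^*\to\cdots$ and dimension-shifting $\Tor^A_1(K_j,P)$ up to $\Tor^A_{d+1}(-,P)=0$. Both arguments are correct. Your version is more elementary and self-contained --- it avoids the auxiliary cohomologies $V^*$ and Proposition \ref{d-ex} entirely and isolates a clean, reusable tensor-hom interchange lemma --- whereas the paper's route is shorter once the $V^*$ formalism is already set up, and that same formalism is what powers the non-perfect variant in Theorem \ref{gor-cm}. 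You also give an explicit depth-lemma proof of the $S_2$ claim via a free presentation of $M^*/IM^*$ over $A/I$; the paper states that conclusion without further justification.
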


\textbf{IV:}
Next we show that finiteness of projective (injective) dimensions of all but one Koszul homology modules implies the finiteness of projective (injective) dimension of the remaining one.
\begin{theorem}\label{all-one}
Let $(A,\m)$ be a Noetherian local ring. Let $M$ be a finitely generated $A$-module. Assume projective (injective) dimension of $M$ is finite. Let $I$ be an ideal in $A$.
Assume projective (injective) dimension of $H_j(I, M)$ is finite for all $j \neq i$. Then projective (injective) dimension of $H_i(I, M)$ is finite
\end{theorem}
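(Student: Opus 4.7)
The plan is to work in the bounded derived category $D^b(A)$ and exploit that the Koszul complex $C := \K(\bu, M)$ is a bounded complex whose terms are finite direct sums of copies of $M$. Since $M$ has finite projective (resp.\ injective) dimension, so does every term of $C$. A standard fact---provable by tensoring a finite free resolution of $M$ with the Koszul complex on $\bu$, or dually via a Cartan--Eilenberg resolution---says that a bounded complex each of whose terms has finite projective (resp.\ injective) dimension is quasi-isomorphic to a bounded complex of projective (resp.\ injective) modules. Hence $C$ itself has finite projective (resp.\ injective) dimension as an object of $D^b(A)$.

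The key technical ingredient is a two-out-of-three principle: in any distinguished triangle $X \to Y \to Z \to X[1]$ in $D^b(A)$, if two of $X, Y, Z$ have finite projective (resp.\ injective) dimension, so does the third. This is immediate from the long exact sequences of $\Ext^i(-, N)$ (resp.\ $\Ext^i(N, -)$) attached to the triangle, together with the characterization of finite projective (resp.\ injective) dimension of an object of $D^b(A)$ as uniform eventual vanishing of these $\Ext$-modules in $N$.

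With these tools, I extract $H_i(I, M)$ from $C$ through canonical truncations. For each $j$, let $C_{\leq j}$ (respectively $C_{\geq j}$) denote the canonical truncation of $C$ whose only nonzero homology modules are $H_k(I, M)$ for $k \leq j$ (respectively $k \geq j$). An induction on the number of nonzero homologies---using the triangles $C_{\leq j-1} \to C_{\leq j} \to H_j(I, M)[j] \to$ at each $j \neq i$---yields that both $C_{\leq i-1}$ and $C_{\geq i+1}$ have finite projective (resp.\ injective) dimension, since by hypothesis each $H_j(I, M)$ with $j \neq i$ does. Two applications of two-out-of-three then finish the argument: the triangle $C_{\leq i} \to C \to C_{\geq i+1} \to$, together with finite projective (resp.\ injective) dimension of $C$ and $C_{\geq i+1}$, gives that $C_{\leq i}$ has finite projective (resp.\ injective) dimension; and the triangle $C_{\leq i-1} \to C_{\leq i} \to H_i(I, M)[i] \to$ then forces $H_i(I, M)[i]$, and therefore $H_i(I, M)$ itself, to have finite projective (resp.\ injective) dimension.

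The only real obstacle is the derived-category setup: verifying that finite projective (resp.\ injective) dimension in $D^b(A)$ is detected by uniform eventual vanishing of $\Ext$-groups and is preserved by two-out-of-three in triangles. Both facts are standard in the theory of perfect and injectively bounded complexes, so once assembled the argument is purely formal bookkeeping; the only care required is in keeping track of shifts and of the chain-versus-cochain convention between the Koszul complex and the truncation filtration.
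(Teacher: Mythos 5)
Your proof is correct, and it reaches the conclusion by a cleaner, more structural route than the paper's. The paper works with the hyperhomology modules $W_i(\bu,M,N)$ (defined via a projective resolution of $\K(\bu,M)$), specializes $N = k$, observes that $\projdim M < \infty$ forces $W_n(\bu,M,k) = 0$ for $n \gg 0$, and then runs the spectral sequence $\Tor^A_p(H_q(\bu,M),k) \Rightarrow W_{p+q}(\bu,M,k)$, explicitly chasing through the $E_r$-pages with numerical bounds on $p$ to show the one unknown column collapses at $E_2 = E_\infty$ and hence vanishes. Your argument isolates the same information using the Postnikov (canonical-truncation) filtration of $\K(\bu,M)$ in $D^b(A)$ and the two-out-of-three property for finite projective (resp.\ injective) dimension in distinguished triangles; this replaces the spectral-sequence bookkeeping with a finite induction on triangles. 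The two proofs are morally equivalent -- your truncation filtration is precisely what gives rise to the spectral sequence the paper uses, and two-out-of-three is what is implicitly driving the degeneration argument -- but your version makes the mechanism transparent and avoids the ad hoc bounds like $p \geq d+3+m+s$ that appear in the paper. The injective case also comes for free by duality in your setup, whereas the paper repeats the argument with $V^*(\bu,k,M)$.

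One cosmetic point: with the Koszul complex written homologically, the canonical truncations $C_{\leq j}$ that kill homology above degree $j$ are quotients, so the natural maps run $C_{\leq j} \to C_{\leq j-1}$, and the triangles are $H_j(I,M)[j] \to C_{\leq j} \to C_{\leq j-1} \to$ rather than in the direction you wrote. Since two-out-of-three is invariant under rotation of triangles this does not affect the argument, but it is worth getting the arrows right when writing it out. Otherwise the preliminary reductions (bounded complex of modules of finite homological dimension is perfect / injectively perfect; two-out-of-three via the long exact $\Ext$-sequence and eventual uniform vanishing) are all standard and correctly invoked.
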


In a different direction we show that homological properties of all $H_i(I, M)$ imposes condition on $M$. We prove
\begin{theorem}\label{growth}
Let $(A,\m)$ be a Noetherian local ring. Let $M$ be a finitely generated $A$-module.  Let $I$ be an ideal in $A$. Then
\begin{enumerate}[\rm (1)]
  \item  If $\projdim_A H_i(I, M)$ is finite for all $i$ then $\projdim_A M$ is finite.
  \item If $\injdim_A H_i(I, M)$ is finite for all $i$ then $\injdim_A M$ is finite.
\end{enumerate}
\end{theorem}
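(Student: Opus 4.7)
The plan is to compute the hyperhomology $\mathbb{T}_n := H_n(\K(\bu, M) \otimes^{\mathbf{L}}_A k)$ (and dually the hypercohomology $\mathbb{E}^n := H^n \mathbf{R}\Hom_A(k, \K(\bu, M))$) in two genuinely different ways, and match the resulting formulas. For part (1), the standard hyper-Tor spectral sequence
$$E^2_{p,q} = \Tor^A_p(H_q(I, M), k) \Longrightarrow \mathbb{T}_{p+q}$$
gives, under the finite projective dimension hypothesis, a uniform bound $d$ (maximum of $\projdim_A H_q(I, M)$ over the finitely many $q$ with $H_q(I, M) \neq 0$) such that $E^2_{p,q} = 0$ for $p > d$, whence $\mathbb{T}_n = 0$ for $n > d + m$ since also $H_q(I, M) = 0$ for $q > m$.

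For the second computation, I exploit that $\K(\bu)$ is a bounded complex of finitely generated free $A$-modules. Let $Q \to M$ be a projective resolution; then
$$\K(\bu, M) \otimes^{\mathbf{L}}_A k \;\simeq\; \K(\bu) \otimes_A (Q \otimes_A k).$$
Since $Q \otimes_A k$ is a complex of $k$-vector spaces, this rewrites as $(\K(\bu) \otimes_A k) \otimes_k (Q \otimes_A k)$. The complex $\K(\bu) \otimes_A k$ is the Koszul complex on the zero ideal in $k$, so all its differentials vanish and $H_p(\K(\bu) \otimes_A k) = k^{\binom{m}{p}}$. Künneth over the field $k$ yields
$$\mathbb{T}_n \;\cong\; \bigoplus_{p+q=n} \Tor^A_q(M, k)^{\binom{m}{p}}.$$
Matching against the vanishing from the first computation, for $n > d + m$ the $(p,q) = (0, n)$ summand forces $\Tor^A_n(M, k) = 0$, and so $\projdim_A M < \infty$.

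Part (2) runs formally dually. The hyper-Ext spectral sequence $E_2^{p,q} = \Ext^p_A(k, H_{-q}(I, M)) \Rightarrow \mathbb{E}^{p+q}$ vanishes in large cohomological degrees under the finite-injective-dimension hypothesis. Meanwhile, because each $K_p(\bu)$ is a finitely generated free module,
$$\mathbf{R}\Hom_A(k, \K(\bu, M)) \;\simeq\; \K(\bu) \otimes_A \mathbf{R}\Hom_A(k, M);$$
since $\mathbf{R}\Hom_A(k, M)$ is a complex of $k$-vector spaces, the same trick of factoring through $\K(\bu) \otimes_A k$ and applying Künneth over $k$ gives
$$\mathbb{E}^n \;\cong\; \bigoplus_{p \geq 0} \Ext^{n+p}_A(k, M)^{\binom{m}{p}},$$
and the $p = 0$ summand forces $\Ext^n_A(k, M) = 0$ for large $n$, so $\injdim_A M < \infty$.

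The only substantive step is the Künneth-style identification; it works because $\m$ annihilates both $M \otimes^{\mathbf{L}}_A k$ and $\mathbf{R}\Hom_A(k, M)$, which allows $\otimes_A$ (respectively $\mathbf{R}\Hom_A$) with $\K(\bu)$ to be pushed through the coefficient change $A \to k$, reducing everything to Künneth over a field. The rest is standard spectral-sequence bookkeeping. I do not anticipate any serious obstacle beyond being careful with chain-versus-cochain conventions when identifying both sides.
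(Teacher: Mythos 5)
Your proof is correct, and it shares with the paper's the key first step: the spectral sequence
\[
E^2_{p,q} = \Tor^A_p(H_q(I,M), k) \Rightarrow H_{p+q}\bigl(\K(\bu,M)\otimes^{\mathbf{L}}_A k\bigr)
\]
(the paper's $W_{p+q}(\bu,M,k)$) together with the finite projective dimension hypothesis gives vanishing of the abutment in high degree; dually for the injective case. Where you diverge is in extracting $\Tor^A_n(M,k)=0$ from that vanishing. The paper peels off one variable $u_i$ at a time using the mapping-cone long exact sequence of Theorem \ref{prop-W} (resp.\ \ref{prop}) together with Nakayama, descending from $W_n(\bu,M,k)=0$ to $W_n(\emptyset,M,k)=\Tor^A_n(M,k)=0$. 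You instead compute the hyperhomology a second way: base-change through $k$ to write it as $(\K(\bu)\otimes_A k)\otimes_k(Q\otimes_A k)$, note the Koszul differentials die over $k$, and apply K\"unneth over the field to get the explicit formula
\[
H_n\bigl(\K(\bu,M)\otimes^{\mathbf{L}}_A k\bigr)\;\cong\;\bigoplus_{p+q=n}\Tor^A_q(M,k)^{\binom{m}{p}},
\]
from which the $p=0$ summand immediately yields the result; the dual computation handles the injective case. Both arguments are sound. Yours is arguably more transparent here and gives a sharper output (an explicit identification of the hyperhomology, not merely its vanishing), but it is tailored to the residue field because it leans on the Koszul differentials being zero mod $\m$. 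The paper's cone-plus-Nakayama descent is uniform in the auxiliary module $N$ and is re-used essentially verbatim in the proofs of Theorems \ref{ci} and \ref{ci-m} with $N$ a test MCM module rather than $k$, which is why the paper adopts it as the standard closing step throughout.
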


\textbf{V:} Now assume that $A$ is a local complete intersection. Then there is a notion of support variety $V(M)$ of an $A$-module $M$. We show:
\begin{theorem}
  \label{ci} Let $(A,\m)$ be a complete local complete intersection and let $I$ be an ideal of $A$. Assume $k = A/\m$ is algebraically closed. Let $M$ be a finitely generated $A$-module with $\projdim_A M < \infty$. Set $g =  \grade(I,M)$. Then for $i = 0, \ldots, \mu(I) - g$ we have
  $$ V(H_i(I, M)) \subseteq  \bigcup_{j \neq i} V(H_j(I, M)).$$
  In particular for every $i = 0,\ldots, \mu(I)-g$ we have
  $$\cx_A H_i(I, M) \leq \max \{ \cx_A H_j(I, M) \mid j \neq i \}.$$
\end{theorem}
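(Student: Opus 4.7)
The plan is to prove the stronger statement that the total Koszul complex, viewed as an object of $D^b(A)$, is perfect, so its support variety (in the sense of Avramov--Buchweitz) is trivial, and then to extract the desired inclusion via truncation triangles. Since $\projdim_A M$ is finite, $M$ admits a bounded free resolution $F_\bullet\twoheadrightarrow M$. The double complex $\K(\bu)\otimes_A F_\bullet$ is then bounded with free entries, so its total complex $T$ is perfect. Because $\K(\bu)$ is a bounded complex of flats, hence K-flat, the natural map $T\to\K(\bu)\otimes_A M=\K(\bu,M)$ is a quasi-isomorphism. Thus $\K(\bu,M)\simeq T$ is perfect in $D^b(A)$, and in the support-variety theory, valid here since $A$ is a complete local complete intersection with algebraically closed residue field, one has $V(\K(\bu,M))=\{0\}$.

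Two formal properties of $V$ on $D^b(A)$ are needed. For a distinguished triangle $X\to Y\to Z\to X[1]$, the long exact sequence on $\Ext^*_A(-,k)$, compatible with the action of the cohomology operators $\chi_1,\dots,\chi_c$, forces $V(Y)\subseteq V(X)\cup V(Z)$ (and its rotations), and $V$ is shift-invariant. An induction on the number of nonvanishing cohomologies, using the truncation triangle $\tau^{<n}Y\to Y\to\tau^{\geq n}Y$, then yields $V(Y)\subseteq\bigcup_n V(H_n(Y))$ for every $Y\in D^b(A)$.

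Now fix $i$ with $0\leq i\leq\mu(I)-g$. The triangle $\tau^{<i}\K(\bu,M)\to\K(\bu,M)\to\tau^{\geq i}\K(\bu,M)$, combined with $V(\K(\bu,M))=\{0\}$, yields $V(\tau^{\geq i}\K(\bu,M))\subseteq V(\tau^{<i}\K(\bu,M))$ by rotation. Splitting off the bottom cohomology of the upper piece via the further triangle $H_i(I,M)[-i]\to\tau^{\geq i}\K(\bu,M)\to\tau^{>i}\K(\bu,M)$ produces
$$V(H_i(I,M))\subseteq V(\tau^{\geq i}\K(\bu,M))\cup V(\tau^{>i}\K(\bu,M))\subseteq V(\tau^{<i}\K(\bu,M))\cup V(\tau^{>i}\K(\bu,M)).$$
Applying the induction of the previous paragraph to each truncation bounds the right-hand side by $\bigcup_{j<i}V(H_j(I,M))\cup\bigcup_{j>i}V(H_j(I,M))=\bigcup_{j\neq i}V(H_j(I,M))$, which is the claim. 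The complexity inequality follows because $\cx_A N=\dim V(N)$ and the dimension of a finite union of cones is the maximum of the component dimensions.

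The main obstacle is foundational rather than computational: one must invoke the extension of Avramov--Buchweitz support-variety theory from modules to bounded complexes (developed by Avramov--Iyengar and others), and one should adopt the convention $V(0)=\{0\}$ so that the argument handles automatically the degenerate case where only one Koszul homology is nonzero. Once this machinery is in place, the proof is a short triangulated-category calculation.
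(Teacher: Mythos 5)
Your proof is correct but takes a genuinely different route from the paper's. The paper fixes a point $a \notin \bigcup_{j\neq i} V(H_j(I,M))$, uses Bergh's realization theorem to pick an MCM test module $N$ with $V(N) = \{a\}$, and then runs the homology spectral sequence $\Tor^A_p(H_q(I,M), N) \Rightarrow W_{p+q}(\bu, M, N)$ of Proposition \ref{third-ss}: the Avramov--Buchweitz criterion kills $E^2_{p,q}$ for $p>d$ and $q\neq i$, while $\projdim_A M < \infty$ kills the abutment in large total degree, and an explicit chase through the pages forces $E^2_{p,i} = \Tor^A_p(H_i(I,M),N) = 0$ for $p\gg 0$, so $a\notin V(H_i(I,M))$. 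Your argument instead promotes the support variety to a function on $D^b(A)$, observes that $\projdim_A M<\infty$ makes $\K(\bu,M)$ perfect hence of trivial support, and then extracts the inclusion from truncation triangles via the subadditivity $V(Y)\subseteq V(X)\cup V(Z)$ for a distinguished triangle $X\to Y\to Z$; the finite-projective-dimension hypothesis enters in exactly the same place conceptually but is exploited at the level of the complex rather than through vanishing of $W_*$. The trade-off is clear: the paper's proof stays entirely in the module-level theory and uses only the spectral sequences constructed in Section~2 plus Bergh's theorem, whereas your argument is shorter and more conceptual but requires invoking the extension of Avramov--Buchweitz support theory to bounded complexes (Avramov--Iyengar and related work), together with its compatibility with triangles, shifts, and truncations, which is a substantial additional input not introduced in the paper. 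One small point worth flagging: the paper works with $V(-)\subseteq\mathbb{P}^{r-1}(k)$, under which a perfect complex has empty (not $\{0\}$) support variety; your convention should be harmonized accordingly, though this does not affect the substance of the argument.
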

We also show
\begin{theorem}
  \label{ci-m} Let $(A,\m)$ be a complete local complete intersection and let $I$ be an ideal of $A$. Assume $k = A/\m$ is algebraically closed. Let $M$ be a finitely generated \\ $A$-module.  We have
  $$ V(M) \subseteq  \bigcup_{ i} V(H_i(I, M)).$$
  In particular we have
  $$\cx_A M \leq \max \{ \cx_A H_j(I, M) \mid j  = 0, \ldots, \mu(I)-\grade(I) \}.$$
\end{theorem}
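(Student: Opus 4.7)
The plan is to reduce the theorem to Theorem \ref{growth}(1) by means of the Avramov--Buchweitz reduction of a complete intersection to a hypersurface.

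Write $A = Q/(\fb)$ with $(Q,\n)$ complete regular local and $\fb = f_1,\ldots,f_c$ a $Q$-regular sequence. For each nonzero $\bar a = (a_1,\ldots,a_c) \in k^c$, set $\tilde f_{\bar a} := \sum_i a_i f_i \in Q$ (lifting scalars via a coefficient field) and put $R_{\bar a} := Q/(\tilde f_{\bar a})$; this is a hypersurface, and the surjection $R_{\bar a} \twoheadrightarrow A$ makes any $A$-module an $R_{\bar a}$-module. The key external input is the Avramov--Buchweitz characterization (valid because $k$ is algebraically closed and $A$ is complete): for every finitely generated $A$-module $N$,
\[
\bar a \in V(N) \iff \projdim_{R_{\bar a}} N = \infty.
\]

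Suppose $\bar a \neq 0$ and $\bar a \notin V(H_i(I,M))$ for every $i$. Then $\projdim_{R_{\bar a}} H_i(I,M) < \infty$ for all $i$. Choose a lift $\tilde \bu = \tilde u_1,\ldots,\tilde u_m$ of $\bu$ in $R_{\bar a}$. Since $M$ is an $A$-module (so $\tilde u_j$ and $u_j$ act identically on $M$), the Koszul complex is oblivious to the ambient ring:
\[
K_{R_{\bar a}}(\tilde \bu, M) = K_A(\bu, M), \qquad H_i^{R_{\bar a}}(\tilde \bu, M) = H_i(I,M).
\]
Applying Theorem \ref{growth}(1) to the Noetherian local ring $R_{\bar a}$, the module $M$, and the ideal $(\tilde\bu) R_{\bar a}$ yields $\projdim_{R_{\bar a}} M < \infty$; by the characterization again, $\bar a \notin V(M)$. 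This establishes $V(M) \setminus \{0\} \subseteq \bigcup_i V(H_i(I,M))$, and the origin is contained in $V(H_0(I,M)) = V(M/IM)$ whenever $M \neq 0$ (since $I \subseteq \m$), giving the full inclusion. The complexity inequality follows from $\cx_A N = \dim V(N)$, together with the vanishing of $H_j(I,M)$ for $j$ outside the indicated range.

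The main obstacle is to invoke the Avramov--Buchweitz reduction correctly and to verify the compatibility $H_i^{R_{\bar a}}(\tilde \bu, M) = H_i(I, M)$ under change of ambient ring; once those are in hand, the argument is an immediate corollary of Theorem \ref{growth}(1).
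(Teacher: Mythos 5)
Your proof is correct, but it takes a genuinely different route from the paper's. The paper argues internally to $A$: given $a \notin \bigcup_j V(H_j(I,M))$, it uses Bergh's theorem to produce an MCM module $N$ with $V(N) = \{a\}$, feeds the vanishing $\Tor_r^A(H_q(I,M),N) = 0$ for $r > d$ (from Avramov--Buchweitz Theorems III, IV) into the spectral sequence $\Tor^A_p(H_q(I,M),N) \Rightarrow W_{p+q}(\bu,M,N)$, and then peels off the $u_j$'s one at a time using the long exact sequence of \ref{prop-W} together with Nakayama to reach $\Tor^A_n(M,N) = 0$ for $n \gg 0$. You instead invoke a different facet of the Avramov--Buchweitz theory, the rank-variety characterization $\bar a \in V(N) \iff \projdim_{R_{\bar a}}N = \infty$, check that Koszul homology is insensitive to the ambient ring under $R_{\bar a} \twoheadrightarrow A$, and then simply cite Theorem \ref{growth}(1) over $R_{\bar a}$. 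This is arguably the tidier argument, since it treats \ref{growth}(1) as a black box and avoids redoing the spectral-sequence descent; what it costs is the need for the hypersurface (rank variety) formulation, which is a somewhat different slice of \cite{avr-b} than the Tor-vanishing theorems the paper quotes. One small wrinkle: as defined in the paper, $V(M)$ is a subvariety of $\mathbb{P}^{r-1}(k)$, so there is no ``origin'' to worry about; that remark is a harmless artifact of thinking of $V(M)$ as a cone in $k^c$.

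Note also that your reduction works here precisely because no finiteness of $\projdim_A M$ is needed in Theorem \ref{ci-m}; the analogous reduction for Theorem \ref{ci} would break down, since $\projdim_A M < \infty$ does not imply $\projdim_{R_{\bar a}}M < \infty$ for the intermediate hypersurface.
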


\textbf{VI:} \emph{Depth of Koszul homology modules:} Vanishing of Koszul homology modules gives information on depth of an $A$-module. However  depths of Koszul homology modules themselves are a bit mysterious. We first show
\begin{theorem}
\label{depth} Let $(A,\m)$ be a \CM \ local ring of dimension $d$ and let $I$ be an ideal of $I$. Set $m = \mu(I)$ and $g = \grade(I)$. Set $c = \min \{ \depth H_i(I) \colon 0 \leq i < m -g \}$. Then
\begin{enumerate}[\rm (1)]
  \item $\depth H_{m-g}(I)  \geq c$.
  \item If $c < d-g$ then $\depth H_{m-g}(I) \geq c + 1$.
\end{enumerate}
\end{theorem}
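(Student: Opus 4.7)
The plan is to compute the hypercohomology $\mathbb{H}^*_\m(\K(\bu))$ in two different ways via a \v{C}ech--Koszul double complex (or equivalently a Cartan--Eilenberg injective resolution of $\K(\bu)$ followed by $\Gamma_\m$) and exploit vanishing on one side to force vanishing on the other. The spectral sequence obtained by first taking local cohomology degenerates at $E_2$: each $K_i = A^{\binom{m}{i}}$ is free and $A$ is Cohen--Macaulay of dimension $d$, so $H^p_\m(K_i) = 0$ for $p \neq d$. This gives
$$\mathbb{H}^n_\m(\K(\bu)) \;\cong\; H_{d-n}(\bu,\, H^d_\m(A)),$$
which in particular vanishes whenever $n < d - m$. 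The other spectral sequence reads
$$E_2^{p,i} \;=\; H^p_\m(H_i(I)) \;\Longrightarrow\; \mathbb{H}^{p-i}_\m(\K(\bu)),$$
with differentials $d_r \colon E_r^{p,i} \to E_r^{p+r,\,i+r-1}$.

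I focus on the spots $E_r^{p,\,m-g}$ in this second spectral sequence. Every outgoing $d_r$ with $r \geq 2$ lands in $E_r^{p+r,\,m-g+r-1}$, which is zero since $H_j(I) = 0$ for $j > m-g$. Every incoming $d_r$ with $r \geq 2$ originates from $E_r^{p-r,\,m-g-r+1}$, a subquotient of $H^{p-r}_\m(H_{m-g-r+1}(I))$; since $m-g-r+1 < m-g$, the hypothesis on depths forces this source to vanish as soon as $p - r < c$, i.e.\ $p < c + r$. Combining both vanishings,
$$E_\infty^{p,\,m-g} \;=\; E_2^{p,\,m-g} \;=\; H^p_\m(H_{m-g}(I)) \quad \text{for every } p < c + 2,$$
and this group is a subquotient of $\mathbb{H}^{p-(m-g)}_\m(\K(\bu))$.

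The standing estimate $c \leq \depth(A/I) \leq d - g$ (from $A/I = H_0(I)$ being among the modules in the minimum defining $c$, together with the Cohen--Macaulayness of $A$) drives the conclusion. For (1), if $p < c$ then $p - (m-g) < d - g - (m-g) = d - m$, so the hypercohomology vanishes, and hence $H^p_\m(H_{m-g}(I)) = 0$, proving $\depth H_{m-g}(I) \geq c$. For (2), the strict inequality $c < d - g$ permits the same computation at the edge $p = c$: then $c - (m-g) < d - m$, the hypercohomology still vanishes, and we obtain $H^c_\m(H_{m-g}(I)) = 0$, i.e.\ $\depth H_{m-g}(I) \geq c + 1$. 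The main technical point is the careful bookkeeping of differentials on the $E_r$-pages; once the $E_2$-zero pattern is identified, everything follows from the two convergent spectral sequences.
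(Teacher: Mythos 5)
Your proof is correct. The approach is morally the same as the paper's, but realized differently. The paper works with the modules $V^i(\bu,k,A) = H^i(\Hom_A(k,\E))$, where $\E$ is an injective resolution of the Koszul cochain complex, and the spectral sequence $\Ext^p_A(k,H^q(\bu)) \Rightarrow V^{p+q}(\bu,k,A)$; the crucial vanishing $V^i(\bu,k,A)=0$ for $i<d$ is proved by induction on $m$ via the mapping-cone long exact sequence. You replace $\Ext_A(k,-)$ by local cohomology $H^*_\m(-)$ and replace the inductive vanishing argument by the degeneration of the first hypercohomology spectral sequence (using $H^p_\m$ of a free module over a CM ring is concentrated in degree $d$). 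After the shift by $m$ coming from Koszul self-duality $H^q(\bu)\cong H_{m-q}(\bu)$, the two abutments and the two $E_2$-pages match term for term, so the bookkeeping of differentials and the exploitation of $c\le\depth(A/I)\le d-g$ are identical. What your version buys is self-containment: it does not rely on the $V^*$-machinery developed earlier in the paper, only on the two standard hyper-derived functor spectral sequences for $R\Gamma_\m$ applied to the Koszul complex. (Both arguments implicitly need $m>g$ so that the minimum defining $c$ is over a nonempty set; when $m=g$ the statement degenerates.)
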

We also give an example (see \ref{ex-thurs}) which shows that the above result  may not hold if we do not assume $A$ is \CM.
When $A$ is regular we can improve Theorem \ref{depth}. Set $\cmd E = \dim E - \depth E$, the Cohen-Macaulay defect of $E$. We show
\begin{theorem}
\label{cmd} Let $(A,\m)$ be a regular local ring and let  $I$ be an ideal of $I$. Set $m = \mu(I)$ and $g = \grade(I)$. Set $c = \max \{ \cmd H_i(I) \colon 0 \leq i < m -g \}$. Then
$$ \cmd H_{m-g}(I) \leq \max \{ 0, c-2 \}. $$
\end{theorem}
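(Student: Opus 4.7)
The plan is to deduce the bound from two spectral sequence arguments. First I would run the hyperext spectral sequence
\[
E_2^{p,q} = \Ext^p_A(H_q(I), A) \Longrightarrow H_{m-(p+q)}(I)
\]
obtained from a Cartan--Eilenberg resolution of $\K(\bu)$ together with the Koszul self-duality $\Hom_A(\K(\bu),A) \cong \K(\bu)[-m]$. Since $A$ is regular of dimension $d$ and every $H_q(I)$ is annihilated by $I$, we have $\grade H_q(I) \geq g$, so $\Ext^p(H_q(I), A) = 0$ for $p < g$. On the diagonal $p+q = g$ only the corner $(g,0)$ can survive, and no differentials touch it (incoming sources $(g-r, r-1)$ would have $p < g$, outgoing targets $(g+r, 1-r)$ would have $q < 0$), yielding the identification
\[
H_{m-g}(I) \;\cong\; \Ext^g_A(A/I, A) \;=:\; \omega_{A/I}.
\]

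Over regular $A$ one has $\cmd N = \projdim_A N - \grade N$, and since $\grade \omega_{A/I} = g$ the theorem reduces to showing $\Ext^{g+k}_A(\omega_{A/I}, A) = 0$ for $k > \max\{0,c-2\}$. For this I would invoke the biduality spectral sequence
\[
\widetilde E_2^{p,q} = \Ext^p_A(\Ext^q_A(A/I, A), A) \Longrightarrow A/I
\]
(abutting to $A/I$ concentrated in degree $0$), whose $E_\infty$ vanishes off the antidiagonal $p+q=0$. The same sort of vanishing argument as above shows that no incoming differential touches the column $q = -g$, so $\widetilde E_2^{p,-g} = \Ext^p(\omega_{A/I}, A)$ for $p > g$ must be killed by the outgoing cascade $d_r: \widetilde E_r^{p,-g} \to \widetilde E_r^{p+r,-g-r+1}$, whose targets are $\Ext^{p+r}(\Ext^{g+r-1}(A/I, A), A)$.

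The hypothesis $\cmd H_i(I) \leq c$ for $i < m - g$ enters at this point. Applying the first spectral sequence along the higher diagonals $p+q = g+j$ with $j \geq 1$, the Koszul homologies $H_{m-g-j}(I)$ are built from terms $\Ext^{g+k}(H_{j-k}(I), A)$; the hypothesis thereby controls the projective dimensions of the $\Ext^{g+j}(A/I, A)$ that appear as targets of the biduality cascade. Combining these controls with the cascade forces $\Ext^{g+k}(\omega_{A/I}, A) = 0$ for $k > \max\{0, c-2\}$, where the saving of two over the naive bound $c$ reflects two ``free'' differentials in the biduality cascade --- one from Koszul self-duality and one from the grade shift that makes $\Ext^{<g}(H_q(I), A) = 0$. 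I expect the main technical obstacle to be the combinatorial bookkeeping needed to extract the sharp $c - 2$ bound, which I would organize by induction on $c$, using the short exact sequences derived from the truncation triangle $\tau_{<0}\K(\bu) \to \K(\bu) \to A/I \to $ in the derived category to reduce to successively simpler $\cmd$ estimates.
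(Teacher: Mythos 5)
Your proposal diverges substantially from the paper's argument, and the divergence does not close the gap: the crucial step is left undone. Your first spectral sequence (via Koszul self-duality) does correctly yield $H_{m-g}(I) \cong \Ext^g_A(A/I,A)$, and the reduction of the claim to a vanishing statement for $\Ext^{g+k}_A(\omega_{A/I},A)$ via Auslander--Buchsbaum is sound. But the biduality cascade is exactly where all the content lies, and you only gesture at it. To kill $\widetilde E_2^{g+k,-g}$ with outgoing differentials being zero, you must show that the targets $\Ext^{g+k+r}_A(\Ext^{g+r-1}_A(A/I,A),A)$ vanish for all $r\geq 2$, and the hypothesis bounds $\cmd H_i(I)$ for $i<m-g$, not $\projdim\Ext^{g+j}_A(A/I,A)$ for $j\geq 1$. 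Passing between the two via the first spectral sequence runs the wrong way: the $E_\infty$ page (hence the filtration of the abutment $H_{m-g-j}(I)$) is a subquotient of the $E_2$ page, so controlling $H_{m-g-j}(I)$ gives you no upper bound on $\projdim\Ext^{g+j}_A(A/I,A)=\projdim E_2^{g+j,0}$. Your sentence ``the hypothesis thereby controls the projective dimensions of the $\Ext^{g+j}_A(A/I,A)$'' is unjustified, and I do not see how to make it work without a genuinely new input.

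For comparison, the paper's proof avoids both the biduality and the induction on $c$ entirely. It works with $\Ext^*_A(-,k)$ rather than $\Ext^*_A(-,A)$: taking $\K(\bu)$ as its own projective resolution, the second spectral sequence of the paper gives $E_2^{p,q}=\Ext^p_A(H_q(I),k)\Rightarrow U^{p+q}(\bu,A,k)=H^{p+q}(I,k)$, and $H^n(I,k)=0$ for $n>m$ simply because the Koszul complex has length $m$. The hypothesis translates via Auslander--Buchsbaum into $\projdim_A H_i(I)\leq g+c$ for $i<m-g$, so $E_2^{p,q}=0$ when $p>g+c$ and $q\ne m-g$, and of course $E_2^{p,q}=0$ when $q>m-g$. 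The corner $E_2^{g+c-1,m-g}$ then has no incoming or outgoing differentials, so it equals $E_\infty^{g+c-1,m-g}$, a subquotient of $H^{m+c-1}(I,k)$, which vanishes once $c\geq 2$. This gives $\Ext^{g+c-1}_A(H_{m-g}(I),k)=0$, i.e.\ $\projdim_A H_{m-g}(I)\leq g+c-2$, i.e.\ $\cmd H_{m-g}(I)\leq c-2$; the case $c\leq 1$ is handled by the corner $(g+1,m-g)$. You should revisit your argument with the realization that the vanishing $H^{>m}(I,k)=0$ is the free ceiling that does the work, rather than trying to force the biduality spectral sequence to produce the sharp $c-2$.
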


The module  $H_{m-g}(I, M)$ behaves much better for a module $M$ of finite projective dimension. We show
\begin{theorem}
\label{pdim-depth} Let $(A,\m)$ be a \CM \ local ring of dimension $d$ and let $I$ be an ideal of $A$. Let $M$ be a finitely generated $A$-module. Set $m = \mu(I)$ and $g =\grade(I, M)$.  Then
\begin{enumerate}[\rm (1)]
  \item If $\projdim M \leq d-g-1$ then $\depth H_{m-g}(I, M) \geq 1$.
  \item If $\projdim M \leq d-g-2$ then $\depth H_{m-g}(I, M) \geq 2$.
\end{enumerate}
\end{theorem}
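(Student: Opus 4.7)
The plan is to apply the two hypercohomology spectral sequences of $\Gamma_\m$ applied to the Koszul complex $\K(\bu, M)$, combining an abutment-vanishing statement coming from Auslander–Buchsbaum on $M$ with the top-degree vanishing $H_i(\bu, M) = 0$ for $i > m - g$ which is forced by the grade assumption. Write $P = \projdim_A M$ and $d = \dim A$. Viewing $\K(\bu, M)$ as a cochain complex $C^\bullet$ with $K_i(\bu, M)$ placed in cohomological degree $-i$, I would take a Cartan–Eilenberg injective resolution of $C^\bullet$, apply $\Gamma_\m$ term-by-term, and form the total complex. This computes $\mathbb{H}^\bullet_\m(\K(\bu, M))$ and yields the two spectral sequences
\[
{}^{\mathrm{I}}E_2^{-i,q} = H_i(\bu,\, H^q_\m(M)) \;\Longrightarrow\; \mathbb{H}^{q-i}_\m(\K(\bu, M)),
\]
\[
{}^{\mathrm{II}}E_2^{p,-i} = H^p_\m(H_i(\bu, M)) \;\Longrightarrow\; \mathbb{H}^{p-i}_\m(\K(\bu, M)),
\]
for $0 \leq i \leq m$ and $p, q \geq 0$; the first sequence uses that each $K_i(\bu, M)$ is a direct sum of $\binom{m}{i}$ copies of $M$.

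From $\mathrm{I}$: since $A$ is Cohen–Macaulay and $\projdim_A M = P$, Auslander–Buchsbaum gives $\depth M = d - P$, hence $H^q_\m(M) = 0$ for $q < d - P$. Together with the range $0 \leq i \leq m$ this forces
\[
\mathbb{H}^N_\m(\K(\bu, M)) = 0 \quad\text{for all } N < d - P - m.
\]
I would then inspect $\mathrm{II}$ at the positions $(j, g-m)$ for $j \in \{0,1\}$. Every outgoing differential $d_r$ from $(j, g-m)$ targets $(j+r, g-m-r+1)$, whose $E_2$-term is $H^{j+r}_\m(H_{m-g+r-1}(\bu, M)) = 0$ because the grade hypothesis gives $H_i(\bu, M) = 0$ for $i > m - g$. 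Every incoming differential originates at $(j-r, g-m+r-1)$; for $j \in \{0,1\}$ and $r \geq 2$ we have $j - r < 0$, so the source is a local cohomology in negative degree and so vanishes. Hence $(j, g-m)$ is a permanent cycle for $j = 0, 1$, and
\[
H^j_\m(H_{m-g}(I, M)) \;=\; {}^{\mathrm{II}}E_\infty^{j, g-m}
\]
is a graded piece of $\mathbb{H}^{j + g - m}_\m(\K(\bu, M))$.

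Combining, this graded piece vanishes whenever $j + g - m < d - P - m$, i.e.\ $j < d - P - g$. Under hypothesis (1), $P \leq d - g - 1$ gives the inequality at $j = 0$, so $\depth H_{m-g}(I, M) \geq 1$. Under hypothesis (2), $P \leq d - g - 2$ gives it for both $j = 0$ and $j = 1$, so $\depth H_{m-g}(I, M) \geq 2$. The main obstacle, which is really a careful bookkeeping task, is establishing the permanent-cycle property at $(0, g-m)$ and $(1, g-m)$ in $\mathrm{II}$: it is the conjunction of the top-homology vanishing (annihilating all outgoing differentials) and the non-negativity of local cohomology (annihilating all incoming ones on the first two rows) that upgrades the abutment vanishing from $\mathrm{I}$ to the $E_2$-level vanishing of $H^j_\m(H_{m-g}(I, M))$ needed for the depth bound.
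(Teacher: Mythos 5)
Your proof is correct, and it takes a genuinely different route from the paper. The paper works with the auxiliary theory $W_\bullet(\bu,M,N)$: it takes a projective resolution $\Pb$ of $\K(\bu,M)$, tensors with $N=A/(\bx)$ for a system of parameters $\bx$, uses the iterated mapping-cone long exact sequence to show $W_i(\bu,M,A/(\bx))=0$ for $i\geq d-g+m$ (resp.\ $d-g+m-1$), and then applies the spectral sequence $\Tor^A_p(H_q(\bu,M),A/(\bx))\Rightarrow W_{p+q}$; the depth bound is extracted from $\Tor^A_d(H_{m-g}(I,M),A/(\bx))=H_d(\bx,H_{m-g}(I,M))=0$ (resp.\ $\Tor^A_{d-1}=0$). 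You instead pass directly to hyper-local-cohomology $\mathbf{R}\Gamma_\m(\K(\bu,M))$, obtain the abutment vanishing $\mathbb{H}^N_\m=0$ for $N<d-P-m$ from the first hyperext spectral sequence (where Auslander--Buchsbaum replaces the paper's $\projdim$-bound on $\Tor^A_i(M,A/(\bx))$), and then read off $H^0_\m$ and $H^1_\m$ of the top nonvanishing Koszul homology as permanent cycles in the second. The two mechanisms for detecting depth are classically equivalent---$H_d(\bx,E)=0$ iff $H^0_\m(E)=0$, etc.---so the mathematics is essentially dual, but your route avoids choosing a parameter ideal, avoids the iterated LES computation, and stays inside standard hypercohomology machinery rather than introducing the $W_\bullet$ formalism; the paper's choice is presumably made to keep all its depth results within the unified $V/U/W$ framework it builds in Section 2. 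Both arguments isolate the $(j,g-m)$ corner by the same two facts: top-degree Koszul vanishing ($H_i(\bu,M)=0$ for $i>m-g$) killing one side of the page, and non-negativity of the resolving degree killing the other. Your bookkeeping at $j=0,1$ is sound: the incoming differentials originate at $p<0$ and the outgoing ones land in Koszul degree $>m-g$, so $E_2^{j,g-m}=E_\infty^{j,g-m}$ and the abutment vanishing in total degree $j+g-m<d-P-m$ gives exactly the two stated depth bounds.
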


\textbf{VII:} Estimating depth of $\Hom_A(M, N)$ is usually a difficult job. As an application of our techniques we prove
\begin{theorem}\label{hom}
Let $(A,\m)$ be a \CM \ local ring.
Let $N$ be a maximal \CM \ $A$-module. Let $M$ be another finitely generated $A$-module with $\Hom_A(M, N) \neq 0$ and $\Ext^i_A(M, N) = 0$ for $i > 0$. Then
$\Hom_A(M, N)$ is a maximal \CM \ $A$-module.
\end{theorem}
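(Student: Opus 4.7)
The plan is to deduce that $\Hom_A(M,N)$ is maximal \CM \ by showing a maximal regular sequence on $A$ and $N$ remains regular on $\Hom_A(M,N)$, using the Koszul-complex identifications that are the central technique of this paper.

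Set $d = \dim A$. Since $A$ is \CM \ and $N$ is maximal \CM, I first pick a system of parameters $\bx = x_1,\ldots,x_d \in \m$ that is a regular sequence on both $A$ and $N$. Then $\K(\bx, N) = \K(\bx) \otimes_A N$ has homology concentrated in degree zero, where it equals $N/\bx N$. Each homological degree of $\K(\bx, N)$ is a direct sum of copies of $N$, and so is $\Hom_A(M,-)$-acyclic by the hypothesis $\Ext^i_A(M,N) = 0$ for $i > 0$.

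The key step is to evaluate $H_*(\Hom_A(M, \K(\bx,N)))$ in two distinct ways. Taking a projective resolution $\Pb \to M$ and running the hyperhomology spectral sequence of the double complex $\Hom_A(\Pb, \K(\bx,N))$, the filtration that first computes Koszul homology collapses (thanks to the two acyclicity facts above) and delivers an identification $H_n(\Hom_A(M, \K(\bx,N))) \cong \Ext^{-n}_A(M, N/\bx N)$, which vanishes for every $n > 0$. On the other hand, because $\K(\bx)$ is a bounded complex of finitely generated free $A$-modules, there is a termwise isomorphism of complexes
$$\Hom_A(M, \K(\bx,N)) \cong \K(\bx) \otimes_A \Hom_A(M,N) = \K(\bx, \Hom_A(M,N)).$$
Combining the two computations yields $H_n(\bx, \Hom_A(M,N)) = 0$ for all $n > 0$.

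Since $\bx \subset \m$ and $\Hom_A(M,N)$ is finitely generated and nonzero by assumption, Nakayama's lemma gives $\bx \Hom_A(M,N) \neq \Hom_A(M,N)$; together with the vanishing of positive Koszul homology this forces $\bx$ to be $\Hom_A(M,N)$-regular. Consequently $\depth_A \Hom_A(M,N) \geq d$, and $\Hom_A(M,N)$ is maximal \CM. The one piece of bookkeeping that requires care is keeping homological versus cohomological indexing straight when justifying the convergence $H_n(\Hom_A(M, \K(\bx,N))) \cong \Ext^{-n}_A(M, N/\bx N)$; once that is set up, the rest of the argument is formal.
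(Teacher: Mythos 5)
Your proof is correct, and it takes a genuinely different route from the paper, though both ultimately compare two spectral sequences of a double complex built from a projective resolution of $M$ and a Koszul complex. The paper works with the Koszul complex on a minimal generating set $\bu$ of $\m$: it takes a projective resolution of $\K(\bu,M)$, shows via a mapping-cone argument that $U^i(\m,M,N)\cong H^i(\bu,\Hom_A(M,N))$, and then runs the spectral sequence $\Ext^p_A(H_q(\m,M),N)\Rightarrow H^{p+q}(\bu,\Hom_A(M,N))$; the crucial vanishing $\Ext^p_A(H_q(\m,M),N)=0$ for $p<d$ comes from $H_q(\m,M)$ being a $k$-vector space together with $N$ being maximal \CM. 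You instead take a maximal $A$- and $N$-regular sequence $\bx$ of length $d$ and run the double complex $\Hom_A(\Pb,\K(\bx,N))$; here the crucial inputs are that $\K(\bx,N)$ resolves $N/\bx N$ (since $\bx$ is $N$-regular) and that each $\K_j(\bx,N)\cong N^{\binom{d}{j}}$ is $\Hom_A(M,-)$-acyclic, so the total complex computes $\Ext^*_A(M,N/\bx N)$, which has no negative-degree part; combined with $\Hom_A(M,\K(\bx,N))\cong\K(\bx,\Hom_A(M,N))$ you get $H_n(\bx,\Hom_A(M,N))=0$ for $n>0$ and conclude by Nakayama. Your version is more self-contained, bypassing the paper's $U^*$-formalism and the depth-sensitivity of Koszul cohomology over $\m$, at the cost of some care with indexing in the mixed-variance double complex; the paper's version, by contrast, is a direct application of the spectral sequence it has already set up in Section 2 and reuses the cone trick from Lemma \ref{vext}.
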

Over local complete intersections it is not difficult to construct bountiful examples of modules satisfying  the hypotheses of the above Theorem, see \ref{const-mcm}.

\textbf{VIII:}
When $A$ is Gorenstein then we can prove some results without assuming $H_i(I)$ is perfect.
If $(A,\m)$ is Gorenstein and $I$ is a \CM \ ideal ,i.e., $R/I$ is \CM;  Vasconcelos notes
that $H_{l-g-1}(\by, R)$ is $S_2$; see \cite[1.3.2]{VasKH}. In the same paper \cite[2.2]{VasKH} he shows that there is a canonical map  $ H_{l-g-1}(\by) \rt \Ext_R^g(H_1(\by), R)$ which is an isomorphism  whenever $I$ is strongly \CM \  in codimension one. Recall an ideal $I$ is said to be strongly \CM \ if all the Koszul homology modules $H_i(\by)$ is \CM.

We show
\begin{theorem}
 \label{gor-cm} Let $(A,\m)$ be a Gorenstein local ring and let $I$ be an ideal in $A$ with grade $g$. Let $r \geq 1$ and assume $H_i(I)$ is \CM \ for $i = 0, \ldots, r-1$.
 Then $H^{r+g}(I) \cong \Ext^{g}_A(H_r(I), A)$. In particular $H^{r+g}(I)$ is $S_2$ as an $A/I$-module.
 \end{theorem}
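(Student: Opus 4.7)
The plan is to exploit the hyperext spectral sequence already used for Theorem~\ref{perfect}. Since $\mathbb{K}_\bullet(\bu, A)$ is a bounded complex of finite free $A$-modules, $R\Hom_A(\mathbb{K}_\bullet(\bu, A), A)$ is represented by the Koszul cochain complex $\mathbb{K}^\bullet(\bu, A)$; taking a Cartan-Eilenberg projective resolution of $\mathbb{K}_\bullet(\bu, A)$ and filtering $\Hom_A(-, A)$ of the resulting double complex by columns therefore yields a convergent spectral sequence
\[
E_2^{p,q} = \Ext_A^p(H_q(I), A) \;\Longrightarrow\; H^{p+q}(I).
\]
I want to show this sequence degenerates at the single spot $(p, q) = (g, r)$ on the antidiagonal $p + q = r + g$.

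The two $E_2$-vanishings I would use are: (i) $\Ext_A^p(H_q(I), A) = 0$ for $p < g$, because every $H_q(I)$ is an $A/I$-module and $\grade I = g$; and (ii) for $0 \leq q \leq r - 1$, $H_q(I)$ is \CM \ of grade $g$ by hypothesis (so of dimension $d - g$), and since $A$ is Gorenstein, $\Ext_A^p(H_q(I), A) = 0$ for every $p \neq g$. On the antidiagonal $p + q = r + g$: for $q \leq r - 1$ we have $p \geq g + 1$ and (ii) kills the entry; for $q \geq r + 1$ we have $p \leq g - 1$ and (i) kills it; so $(g, r)$ is the only survivor at $E_2$.

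Next I would verify that no higher differential touches $E_n^{g, r}$. With the bidegree $d_n \colon E_n^{p,q} \to E_n^{p+n, q-n+1}$, the outgoing differential at $(g, r)$ lands in $E_n^{g + n, r - n + 1}$; for $n \geq 2$ this is in a CM row ($r - n + 1 \leq r - 1$) at column $g + n \neq g$, which was killed by (ii). The incoming differential arrives from $E_n^{g - n, r + n - 1}$, lying in column $g - n < g$, killed by (i). Hence $E_\infty^{g, r} = E_2^{g, r}$ and
\[
H^{r+g}(I) \cong \Ext_A^g(H_r(I), A).
\]

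For the $S_2$ assertion, pick an $A$-regular sequence $\bx = x_1, \ldots, x_g$ inside $I$; the standard change-of-rings identification gives $\Ext_A^g(H_r(I), A) \cong \Hom_{A/(\bx)}(H_r(I), A/(\bx))$. Since $A/(\bx)$ is Gorenstein of dimension $d - g$ and hence its own canonical module, this Hom-module is intrinsically $S_2$, and therefore $S_2$ as an $A/I$-module. The only genuinely delicate point is the bookkeeping for the spectral-sequence differentials; beyond the Gorenstein and \CM \ inputs no deeper ingredient is needed.
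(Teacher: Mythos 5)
Your argument is correct and is essentially the paper's own: both proofs take the Koszul complex $\K(\bu)$ as its own projective resolution, so that $U^i(\bu, A, A) = H^i(I)$, and then run the second spectral sequence $\Ext_A^p(H_q(I), A) \Rightarrow H^{p+q}(I)$, killing everything on the antidiagonal $p+q=g+r$ and in the bidegrees hit by higher differentials using (a) $\Ext^p_A(H_q(I),A)=0$ for $p<g$ since $I$ annihilates $H_q(I)$, and (b) $\Ext^p_A(H_q(I),A)=0$ for $p\neq g$ and $q\leq r-1$ by the \CM\ hypothesis over the Gorenstein ring $A$. You make the bookkeeping of the higher differentials and the $S_2$ assertion (via $\Ext^g_A(H_r(I),A)\cong\Hom_{A/(\bx)}(H_r(I),A/(\bx))$) a bit more explicit than the paper, which leaves those points to the reader, but there is no substantive difference in approach.
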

 Perhaps the first case when we do-not know the depth of a Koszul homology module is when $\mu(I) - \grade(I) = 2$.
We  prove:
\begin{theorem}\label{two-add}
 Let $(A,\m)$ be a Gorenstein local ring of dimension $d$ and let $I$ be an ideal in $A$ with grade $g = \mu(I) - 2$. Assume $A/I$ is \CM. Then $\depth H_1(I) \geq d - g -2$.
\end{theorem}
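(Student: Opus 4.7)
The plan is to bound $\sup\{\, j : \Ext^j_A(H_1(I), A) \neq 0 \,\} \leq g + 2$ and then deduce the depth estimate via local duality for the Gorenstein ring $A$. The main tool is the hyper-Ext spectral sequence obtained from a Cartan--Eilenberg resolution of $\K(\bu)$:
$$ E_2^{p,q} = \Ext^p_A(H_q(I), A) \;\Longrightarrow\; H^{p+q}(I) \cong H_{m-p-q}(I), $$
the right-hand identification coming from Koszul self-duality. Under our hypotheses $m - g = 2$, so rows $q \geq 3$ vanish; since $I$ annihilates each $H_q(I)$, grade gives $E_2^{p,q} = 0$ for $p < g$; and since $H_0(I) = A/I$ is \CM\ over the Gorenstein ring $A$, the bottom row is nonzero only at $p = g$, where it equals $\omega_{A/I}$. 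With these vanishings, the only possibly nonzero higher differentials are the maps $d_2^{p,2} \colon \Ext^p(H_2(I), A) \to \Ext^{p+2}(H_1(I), A)$.

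Next I would read off the $E_\infty$ page from the abutment. At total degree $g$ the only surviving term is $E_\infty^{g,0} = \omega_{A/I}$, yielding the identification $H_2(I) \cong \omega_{A/I}$, which is therefore \CM\ of dimension $d - g$. For total degree $n = g + 3 + i$ with $i \geq 0$ the abutment $H_{m-n}(I)$ vanishes, and the only potential contributions are $E_\infty^{g+2+i,1} = \coker d_2^{g+i,2}$ and $E_\infty^{g+1+i,2} = \ker d_2^{g+1+i,2}$. Both must be zero, and running $i = 0, 1, 2, \ldots$ forces $d_2^{g+i,2}$ to be an isomorphism for every $i \geq 1$. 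Since $\omega_{A/I}$ is \CM\ over the Gorenstein ring $A$, we have $\Ext^{g+i}_A(\omega_{A/I}, A) = 0$ for all $i \geq 1$, and hence $\Ext^{g+i+2}_A(H_1(I), A) = 0$ for all $i \geq 1$ as well.

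With $\Ext^j_A(H_1(I), A) = 0$ for $j \geq g + 3$ in hand, local duality over the Gorenstein local ring $A$ gives
$$ \depth H_1(I) \;=\; d - \sup\{\, j : \Ext^j_A(H_1(I), A) \neq 0 \,\} \;\geq\; d - (g+2) \;=\; d - g - 2. $$
I expect the chief obstacle to be the careful spectral-sequence book-keeping in the range $n \geq g + 3$: in particular marshalling the inductive cascade that identifies the $d_2^{g+i,2}$ as isomorphisms and confirming that no higher differentials intervene. Once that is in place, the identification $H_2(I) \cong \omega_{A/I}$ is essentially automatic, and Gorenstein local duality cleanly delivers the depth bound.
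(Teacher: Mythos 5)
Your proof is correct, but it follows a genuinely different route from the paper. The paper uses the first spectral sequence, $E_2^{p,q}=\Ext^p_A(k,H^q(\bu))\Rightarrow V^{p+q}(\bu,k,A)$, together with the preliminary vanishing $V^i(\bu,k,A)=0$ for $i<d$ (obtained from Theorem~\ref{prop} and the Gorenstein property of $A$). Since $\depth N=\min\{p:\Ext^p_A(k,N)\neq0\}$, the depth bound on $H^{g+1}(\bu)\cong H_1(I)$ is read off directly once one shows $E_2^{p,g+1}=E_\infty^{p,g+1}$ in the relevant range; no local duality is invoked. You instead use the second spectral sequence $\Ext^p_A(H_q(I),A)\Rightarrow H^{p+q}(I)$ — the same tool the paper deploys for Theorem~\ref{gor-cm} — and exploit the elementary vanishing of the abutment in total degree $\geq g+3$ to deduce that the $d_2$ differentials $\Ext^{g+i}(H_2(I),A)\to\Ext^{g+i+2}(H_1(I),A)$ are isomorphisms for $i\geq1$; coupling this with $H_2(I)\cong\omega_{A/I}$ (itself recoverable from the degree-$g$ abutment, or from the standard identification of top Koszul homology over a Gorenstein ring) gives $\Ext^j(H_1(I),A)=0$ for $j\geq g+3$, and local duality converts this to the depth bound. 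Each approach has its advantages: the paper's $V$-version needs the auxiliary input $V^i(\bu,k,A)=0$ for $i<d$ but then finishes in one step, while yours needs no auxiliary vanishing (the abutment bound $H^n(I)=0$ for $n>m$ is trivial) but requires the cascade of $d_2$-isomorphisms and a final appeal to local duality. Both are sound; your analysis of why higher differentials cannot intervene (the three-row support, with row $0$ supported in a single column $p=g$) is correct.
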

 Gulliksen  \cite[1.4.9]{GL} proved that if $\projdim_A A/I$ is finite then $H_1(I)$ is a free $A/I$-module if and only if $I$ is generated by a regular sequence.
We note that  if $g = \grade (I)$ and $l = \mu(I)$ then $H_{l-g-1}(I)$ is  the dual of $H_1(I)$ in many cases. Let $A$ be Gorenstein local with $A/I$ \CM.  Then
$H_{l-g}(I)$ is the canonical module of $A/I$. In particular $\injdim_{A/I} H_{l-g}(I)$ is finite. We prove
\begin{corollary}\label{gul}
Let $(A,\m)$ be a Gorenstein local ring and let $I$ be an ideal in $A$ with $\projdim_A A/I$ finite. Set $g = \grade(I)$ and $l = \mu(I)$. Assume $l-g \geq 2$. Also assume $A/I$ and $H_1(I)$ is \CM.
Then $\injdim_{A/I} H_{l-g-1}(I) = \infty$.
\end{corollary}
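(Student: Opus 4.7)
The strategy is to assume $\injdim_{A/I}H_{l-g-1}(I)<\infty$ and derive a contradiction with Gulliksen's theorem by showing that $H_1(I)$ would then be free over $A/I$.

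First, I identify $H_{l-g-1}(I)$ with $H^{g+1}(I)$ using Koszul self-duality $H^{i}(I)\cong H_{l-i}(I)$, which comes from the wedge-product pairing $K^{\bullet}(\bu)\cong K_{l-\bullet}(\bu)$. Applying Theorem~\ref{gor-cm} with $r=1$, whose only hypothesis (that $H_0(I)=A/I$ is \CM) is assumed, gives
\[
H_{l-g-1}(I)\;\cong\;H^{g+1}(I)\;\cong\;\Ext_A^{g}\bigl(H_1(I),A\bigr).
\]
Since $A/I$ is \CM\ with $\projdim_A A/I<\infty$, Auslander--Buchsbaum forces $\projdim_A A/I=g$, and $\omega_{A/I}:=\Ext_A^{g}(A/I,A)$ is the canonical module of $A/I$. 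I read the hypothesis ``$H_1(I)$ is \CM'' as ``$H_1(I)$ is maximal \CM\ over $A/I$'' --- otherwise $\dim H_1(I)<d-g$, and Gorenstein duality forces $\Ext_A^{g}(H_1(I),A)=0$, making the conclusion vacuous. The change-of-rings spectral sequence $\Ext_{A/I}^{p}(H_1(I),\Ext_A^{q}(A/I,A))\Rightarrow \Ext_A^{p+q}(H_1(I),A)$ has its only nonzero row at $q=g$ and degenerates, yielding
\[
H_{l-g-1}(I)\;\cong\;\Hom_{A/I}\bigl(H_1(I),\omega_{A/I}\bigr),
\]
which is itself a maximal \CM\ $A/I$-module as the $\omega_{A/I}$-dual of a maximal \CM\ module.

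Now assume for contradiction that $\injdim_{A/I}H_{l-g-1}(I)<\infty$. The crux is the classical Sharp--Foxby result: over a \CM\ local ring $R$ with canonical module $\omega$, a maximal \CM\ $R$-module $N$ with $\injdim_R N<\infty$ is isomorphic to $\omega^n$ for some $n\geq 1$. (One can verify this in line: if $N^{\vee}:=\Hom_R(N,\omega)$, then MCM-ness of $N^{\vee}$ gives $\Ext_R^{i}(N^{\vee},\omega)=0$ for $i>0$, so applying $\Hom_R(-,\omega)$ to a projective resolution of $N^{\vee}$ produces an $\omega$-coresolution of $N$; finite $\injdim_R N$ forces this coresolution to be finite and thus $\projdim_R N^{\vee}<\infty$, whence Auslander--Buchsbaum makes $N^{\vee}$ free and $N\cong\omega^n$.) Applied with $R=A/I$ and $N=H_{l-g-1}(I)$, this gives $H_{l-g-1}(I)\cong\omega_{A/I}^{n}$. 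Dualising once more with $\Hom_{A/I}(-,\omega_{A/I})$, using reflexivity on MCM modules and $\Hom_{A/I}(\omega_{A/I},\omega_{A/I})\cong A/I$, one obtains
\[
H_1(I)\;\cong\;\Hom_{A/I}\!\bigl(\omega_{A/I}^{n},\omega_{A/I}\bigr)\;\cong\;(A/I)^{n},
\]
so $H_1(I)$ is free over $A/I$. By Gulliksen's theorem (quoted just before the corollary) this forces $I$ to be generated by a regular sequence, hence $l=g$, contradicting $l-g\geq 2$.

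The principal obstacle is the Sharp--Foxby classification invoked in the final step; one can either cite it or prove it in line by the sketch above. A secondary subtlety is the implicit MCM reading of the \CM\ assumption on $H_1(I)$, which is essential both for the change-of-rings isomorphism and to ensure $H_{l-g-1}(I)\neq 0$.
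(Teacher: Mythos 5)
Your proof is correct and follows essentially the same route as the paper's: use Theorem \ref{gor-cm} and Koszul self-duality to identify $H_{l-g-1}(I)$ with $\Ext_A^g(H_1(I),A)$, deduce it is maximal \CM\ over $A/I$, apply the Sharp--Foxby classification of maximal \CM\ modules of finite injective dimension to get $H_{l-g-1}(I)\cong\omega_{A/I}^{\,r}$, dualize to conclude $H_1(I)\cong(A/I)^r$, and contradict Gulliksen. You spell out the Sharp--Foxby step and correctly flag that ``$H_1(I)$ is \CM'' must be read as ``maximal \CM\ over $A/I$'' (the paper's assertion $\dim H^{g+1}(I)=\dim A/I$ tacitly assumes this), but the core argument is the one in the paper.
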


\s \emph{Technique to prove our results:}
To prove result regarding modules, many a times we have to take projective or injective resolutions of the module. The main idea to prove our results is that we have to take projective or injective resolutions of the Koszul complex.

Let $\Kc(\bu, M)$ be the Koszul co-chain complex of $\bu = u_1, \ldots, u_m$ with coefficients in $M$. Let $\E$ be an injective resolution of $\Kc(\bu, M)$. Let $N$ be another finitely generated $A$-module.
We denote by $V^i(\bu , N, M)$ the $i^{th}$-cohomology of the co-chain complex $\Hom_A(N, \E)$. It can be shown that these cohomology modules are independent of the injective resolution chosen.
We use these cohomology modules to understand the cohomology modules $H^*(\bu, M)$. The modules $V^i(\bu, N, M)$ behave like Koszul homology modules. We prove
\begin{theorem}
  \label{prop}(with hypotheses as above)
  Set $I = (\bu)$. We have
  \begin{enumerate}[\rm (1)]
    \item $IV^*(\bu, N, M) = 0$. Furthemore $\ann M + \ann N \subseteq \ann V^*(\bu, N, M)$.
    \item  Set $\bu^\prime = u_1, \ldots, u_{m-1}$. Then we have a long exact sequence for all $i \in \Z$
    $$ \cdots \rt V^i(\bu,  N, M) \rt  V^i(\bu^\prime,  N, M)\xrightarrow{u_m} V^i(\bu^\prime, N, M) \rt V^{i+1}(\bu, N, M) \rt \cdots$$
  \end{enumerate}
\end{theorem}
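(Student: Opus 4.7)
The plan is to exploit the interpretation of $V^i(\bu,N,M)$ as hyper-$\Ext$ groups $\Ext^i_A(N, \Kc(\bu,M))$ computed via the chosen injective resolution $\E$, so that both parts descend from well-known features of the Koszul co-chain complex itself.

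For part~(1), I would begin with the classical fact that each $u_j$ acts null-homotopically on the Koszul complex $\K(\bu,M)$, and hence on $\Kc(\bu,M)$; an explicit contracting homotopy is given by (the dual of) wedging with the corresponding basis element. Since injective resolutions are unique up to chain homotopy, this null-homotopy lifts to a null-homotopy of $u_j$ acting on $\E$. Applying $\Hom_A(N,-)$ preserves null-homotopies, so $u_j$ annihilates every $V^i(\bu,N,M)$, yielding $IV^*(\bu,N,M)=0$. For $a \in \ann M$, multiplication by $a$ is literally zero on every term of $\Kc(\bu,M)$, so both the zero map and $a\cdot \mathrm{id}_{\E}$ are lifts of the zero chain map; they must therefore be chain-homotopic, and so $a$ annihilates $V^*$. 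For $b \in \ann N$, the action of $b$ on $\Hom_A(N,\E)$ is already zero at the cochain level, so the conclusion is immediate.

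For part~(2), the starting point is the short exact sequence of Koszul co-chain complexes arising from the tensor decomposition $\Kc(\bu,M) \cong \Kc(\bu',M) \otimes_A \Kc(u_m)$, namely
\[
0 \to \Kc(\bu',M)[-1] \to \Kc(\bu,M) \to \Kc(\bu',M) \to 0,
\]
whose associated long exact cohomology sequence is the familiar Koszul long exact sequence with connecting map (up to sign) multiplication by $u_m$. I would lift this at the level of injective resolutions: choose an injective resolution $\E'$ of $\Kc(\bu',M)$, lift $u_m$ to a chain endomorphism $\widetilde{u}_m : \E' \to \E'$, and take $\E$ to be the shift by $-1$ of the mapping cone of $\widetilde{u}_m$. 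This $\E$ is an injective resolution of $\Kc(\bu,M)$, and by construction sits in a term-wise split short exact sequence
\[
0 \to \E'[-1] \to \E \to \E' \to 0.
\]
Because this sequence is term-wise split, $\Hom_A(N,-)$ preserves its exactness, and the long exact cohomology sequence of the resulting sequence of cochain complexes is exactly the long exact sequence asserted in the statement. Naturality of the connecting homomorphism identifies it with (a sign times) multiplication by $u_m$ on $V^i(\bu',N,M)$.

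The main technical obstacle is the construction of the compatible injective resolutions in part~(2) and the bookkeeping needed to identify the connecting homomorphism with multiplication by $u_m$, especially the sign and shift conventions when passing between $\Kc(\bu,M)$, its cone description, and an injective lift of $u_m$. Beyond this, the argument reduces to standard facts about null-homotopies on the Koszul complex and exactness of $\Hom_A(N, -)$ on termwise-split short exact sequences.
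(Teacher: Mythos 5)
Your proposal is correct and follows essentially the same route as the paper: in part (1) the key is that multiplication by $u_j$ on $\Kc(\bu,M)$ is null-homotopic, and that this transports across a quasi-isomorphism to a null-homotopy on the bounded-below complex of injectives $\E$ (the paper phrases this in $D^b(A)$, you phrase it via the comparison theorem — same content); in part (2) both you and the paper take the mapping cone of multiplication by $u_m$ on an injective resolution of $\Kc(\bu',M)$ and use the resulting termwise-split short exact sequence. One small simplification available to you in (2): there is no need to ``lift'' $u_m$ to a chain endomorphism $\widetilde{u}_m$ of $\E'$, since multiplication by $u_m$ on the complex of $A$-modules $\E'$ is already a chain map; this is exactly what the paper uses.
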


Let $\K(\bu, M)$ be the Koszul chain complex of $\bu = u_1, \ldots, u_l$ with coefficients in $M$. Let $\Pb$ be a projective resolution of $\K(\bu, M)$. Let $N$ be another finitely generated $A$-module.
\begin{enumerate}
  \item We denote by $U^i(\bu , M, N)$ the $i^{th}$-cohomology of the co-chain complex $\Hom_A(\Pb, N)$. We can prove results for $U^*(\bu, M, N)$ which are analogus  to Theorem \ref{prop}.
  \item We denote by $W_i(\bu , M, N)$ the $i^{th}$-homology of the chain complex $\Pb\otimes N$. We can prove results for $W_*(\bu, M, N)$ which are analogus  to Theorem \ref{prop}.
\end{enumerate}

A spectral sequence relates Koszul (co)-homology with $V^*(\bu, N, M)$, \\  $U^*(\bu, M, N)$ and $W_*(\bu, M, N)$.
We show
\begin{theorem}
\label{spectral}(with notation as above).
\begin{enumerate}[\rm (1)]
  \item There exists a first quadrant cohomology spectral sequence
  $$ \Ext_A^p(N, H^q(\bu, M)) \Rightarrow V^{p+q}(\bu,  N, M).$$
  \item There exists a first quadrant  cohomology spectral sequence
  $$ \Ext_A^p(H_q(\bu, M), N) \Rightarrow U^{p+q}(\bu, M, N).$$
  \item There exists a first quadrant  homology spectral sequence
  $$ \Tor^A_p(H_q(\bu, M), N) \Rightarrow W_{p+q}(\bu, M, N).$$
\end{enumerate}
\end{theorem}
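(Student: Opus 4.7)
\emph{Plan.} The three spectral sequences are all instances of the same standard construction: resolve the Koszul (co)chain complex by a Cartan--Eilenberg resolution, apply the relevant additive functor, and extract one of the two canonical spectral sequences of the resulting first-quadrant double complex.

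For part (1) I would choose a Cartan--Eilenberg injective resolution $\mathbf{E}^{\bullet,\bullet}$ of the cochain complex $\Kc(\bu,M)$: each column $\mathbf{E}^{p,\bullet}$ is an injective resolution of $\mathbb{K}^p(\bu,M)$, and by the defining property of such a resolution the horizontal cocycles, coboundaries, and cohomologies $Z^{p,q}, B^{p,q}, H^{p,q}$ are all injective, the complexes $H^{p,\bullet}$ are injective resolutions of $H^p(\bu,M)$, and the horizontal short exact sequences $0\to Z^{p,q}\to \mathbf{E}^{p,q}\to B^{p+1,q}\to 0$ and $0\to B^{p,q}\to Z^{p,q}\to H^{p,q}\to 0$ split. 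Because $\Kc(\bu,M)$ lives in degrees $0,\ldots,\mu(I)$, the total complex $\Tot(\mathbf{E})$ is an honest injective resolution of $\Kc(\bu,M)$, so $H^*(\Hom_A(N,\Tot(\mathbf{E})))=H^*(\Tot(\Hom_A(N,\mathbf{E}^{\bullet,\bullet})))$ computes $V^*(\bu,N,M)$. The bicomplex $\Hom_A(N,\mathbf{E}^{\bullet,\bullet})$ is first-quadrant, so both of its filtration spectral sequences converge to $V^*(\bu,N,M)$; in the one obtained by first computing cohomology in the original $\Kc$-direction, the splittings force $\Hom_A(N,-)$ to commute with horizontal cohomology, and the subsequent vertical cohomology computes $\Ext_A^p$ of $N$ against an injective resolution of $H^q(\bu,M)$, producing the desired $E_2$-page $\Ext_A^p(N,H^q(\bu,M))$.

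Parts (2) and (3) are entirely parallel. Take a Cartan--Eilenberg projective resolution $\mathbf{P}_{\bullet,\bullet}$ of the chain complex $\K(\bu,M)$; the horizontal splittings are again automatic because the relevant cycles, boundaries, and homologies are projective. Apply $\Hom_A(-,N)$ to obtain a first-quadrant double cochain complex whose total cohomology is $U^*(\bu,M,N)$, and apply $-\otimes_A N$ to obtain a first-quadrant double chain complex whose total homology is $W_*(\bu,M,N)$, in each case by the same comparison-of-resolutions argument used above. The spectral sequence that first takes homology in the $\K$-direction uses the horizontal splittings to commute the relevant functor past homology, yielding $E_2^{p,q}=\Ext_A^p(H_q(\bu,M),N)$ converging to $U^{p+q}(\bu,M,N)$ in case (2) and $E_2^{p,q}=\Tor^A_p(H_q(\bu,M),N)$ converging to $W_{p+q}(\bu,M,N)$ in case (3).

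The principal point to verify carefully is that $V^*$, $U^*$, and $W_*$, as defined before the statement via an \emph{arbitrary} injective/projective resolution of the Koszul complex, really do agree with the values computed on $\Tot(\mathbf{E})$ and $\Tot(\mathbf{P})$. This reduces to the standard fact that any two injective (resp.\ projective) resolutions of a bounded complex are homotopy equivalent, so the choice of resolution is immaterial; this is the only genuine technical obstacle. Once that comparison is in place the rest is routine: convergence of both spectral sequences is automatic since each bicomplex is first-quadrant, and the identification of the $E_2$-page is forced by the Cartan--Eilenberg splittings, which is exactly what the bicomplex machinery was built to deliver.
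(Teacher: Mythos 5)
Your argument is correct but takes a genuinely different route from the paper. You resolve only the Koszul complex, and you do so with a Cartan--Eilenberg double resolution; applying the relevant one-variable functor ($\Hom_A(N,-)$, $\Hom_A(-,N)$, or $-\otimes_A N$) then yields a first-quadrant bicomplex, and the CE splittings are exactly what let the functor pass through horizontal (co)homology to identify the $E_2$-page. This is the classical hyperderived-functor construction (Weibel, Ch.~5). The paper instead uses a ``balanced'' approach: it keeps a single-complex projective or injective resolution $\Pb$ or $\E$ of the Koszul complex (no CE structure needed), resolves the \emph{other} argument $N$ separately, and forms the double complex $\Hom_A(\Pb,\E)$, $\Hom_A(\Pb,\E)$, or $\Pb\otimes\F$ from the two one-variable resolutions. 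Then one of the two canonical spectral sequences collapses at $E_2$ (because $\Hom$ or $\otimes$ against a projective/injective term is exact), recovering $V^*$, $U^*$, or $W_*$, while the other spectral sequence gives the stated $E_2$-page via the exactness of $\Hom(P_p,-)$, $\Hom(-,E^p)$, or $-\otimes F_p$. The paper's approach avoids invoking the CE machinery and its splitting properties, at the cost of introducing a second resolution; your approach keeps a single (though more elaborate) resolution and leans on a piece of standard but heavier technology. Both yield first-quadrant double complexes with the same abutment and $E_2$-page, so both prove the theorem as stated; your remark that the value on $\Tot(\mathbf{E})$ (resp.\ $\Tot(\mathbf{P})$) agrees with the previously defined $V^*$, $U^*$, $W_*$ is precisely the independence-of-resolution point the paper records in \ref{ind-V}, \ref{ind-U}, \ref{ind-W}, and your observation that boundedness of the Koszul complex makes $\Tot$ of the CE resolution an honest resolution is the right thing to flag.
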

Here is an overview of the contents of this paper. In section two we describe our construction of projective (injective) resolutions of the Koszul complex and their homology groups. We also construct the three spectral sequences \ref{spectral}. In section three we give a proof of Theorem \ref{prop}, In section four we give a proof of Theorem \ref{hyp}. In the next section we give a proof of Theorem \ref{chi}. In section six we give a proof of Theorem \ref{perfect}. In the next section we give proofs of Theorems \ref{all-one} and \ref{growth}. In section eight we give proofs of Theorems \ref{ci} and \ref{ci-m}. In the next section we give proofs of Theorems \ref{depth}, \ref{cmd} and \ref{pdim-depth}. In section ten we give a proof of Theorem \ref{hom}. Finally in section eleven we give proofs of Theorems \ref{gor-cm}, \ref{two-add}.
\section{The construction and three spectral sequences}
In this section $A$ will denote a Noetherian ring. In this section we describe our constructions and three spectral sequences that we need.

\s  We let $D^b(A)$ denote the bounded derived category of $A$.
Let $K^{b, *}(\Proj A)$ be complexes $\Pb$ of finitely generated projective $A$ modules with $\Pb^n = 0$ for $n \gg 0$ and $H^*(\Pb)$ finitely generated.  Let
$K^{b, *}(\Inj A)$ be complexes $\E$ of injective $A$ modules with ${\E}^n = 0$ for $n \ll 0$ and $H^*(\E)$ finitely generated.  We note that the projective (injective) resolution functor yields an equivalence between $D^b(A)$ and $K^{b, *}(\Proj A)$ ($K^{b, *}(\Inj A)$).

\s \label{ind-V} Let $\Kc(\bu, M)$ be the Koszul co-chain complex of $\bu = u_1, \ldots, u_l$ with coefficients in $M$. Let $\E$ be an injective resolution of $\Kc(\bu, M)$. Let $N$ be another finitely generated $A$-module. We denote by $V^i(\bu , N, M)$ the $i^{th}$-cohomology of the co-chain complex $\Hom_A(N, \E)$. We have to show that these cohomology modules are independent of the injective resolution chosen. Let ${\E}^\prime$ be any other injective resolution of $\Kc(\bu, M)$. Then as $\E \cong {\E}^\prime$ in $D^b(A)$ we have maps $f \colon \E \rt {\E}^\prime$ and $g \colon {\E}^\prime  \rt \E$ in $D^b(A)$ with $g \circ f = 1_{\E}$ and $f\circ g = 1_{{\E}^\prime}$. We note that as $\E$ and ${\E}^\prime$ are complexes of injective $A$-modules it follows that $f, g$ are maps in $K^{b, *}(\Inj A)$. Therefore $\Hom_A(N, \E) \cong \Hom_A(N, {\E}^\prime)$. The result follows.

\s \label{f-ss} \emph{Construction of the first  spectral sequence:} Let $\Pb \rt N$ be a projective resolution of $N$ and let $\E$ be an injective resolution of $\Kc(\bu, M)$ with $E^n = 0$ for $n < 0$.
We consider the Hom co-chain double complex $\C = \Hom(\Pb, \E)$; see \cite[2.7.4]{Weibel}. Set
\[
\C = \{C^{pq} \}_{p,q\geq 0} \quad \text{where} \quad  C^{pq} = \Hom(P_p, E^q).
\]
Set $\T  = \Tot^{\bigoplus}(\C)$ where
$$ T^{n} = \bigoplus_{p+q = n}C^{pq}.$$

The cohomology of $\T$ can be computed by using two standard spectral sequences. We first consider the spectral sequence
whose ${}^{I}E_{1}^{pq}$ term is given by $ H^q(\Hom_A(P_p, \E))$.  Afterwards we consider the second standard spectral sequence whose ${}^{II}E_{1}^{pq}$  is given by $ H^{q}\left(\Hom_A(\Pb, E^p) \right)$.
We first prove
\begin{proposition}\label{first-ss}
We have
\begin{enumerate}[\rm (1)]
  \item ${}^{II}E_{2}^{pq}$-collapses and yields $H^n(\T) \cong V^n(\bu, N, M)$.
  \item ${}^{I}E_{2}^{pq} = \Ext^p_A(N, H^q(\bu, M))$.
\end{enumerate}
\end{proposition}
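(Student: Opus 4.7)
The plan is to unwind the two standard spectral sequences of the Hom double complex $\C$; the only inputs needed are that $\Pb$ is a projective resolution of $N$, that $\E$ is an injective resolution of $\Kc(\bu, M)$, and the exactness properties of $\Hom$ with respect to projective and injective arguments. Since $\Pb$ is concentrated in non-negative chain degrees and $\E$ in non-negative cohomological degrees, the double complex $\C$ lies in the first quadrant, the total complex has finite pieces in each degree, and both spectral sequences converge strongly to $H^*(\T)$.

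For (1) I would fix $p$ and examine $\Hom_A(\Pb, E^p)$ as a cochain complex in the $\Pb$-direction. Because $E^p$ is injective, $\Hom_A(-, E^p)$ is exact, so applying it to the augmented resolution $\cdots \rt P_1 \rt P_0 \rt N \rt 0$ yields the exact sequence
\[
0 \rt \Hom_A(N, E^p) \rt \Hom_A(P_0, E^p) \rt \Hom_A(P_1, E^p) \rt \cdots.
\]
Hence ${}^{II}E_1^{pq}$ equals $\Hom_A(N, E^p)$ when $q = 0$ and vanishes when $q > 0$. At the $E_2$-page only the row $q = 0$ survives, and ${}^{II}E_2^{p0}$ is the $p$-th cohomology of the cochain complex $\Hom_A(N, \E)$, which is $V^p(\bu, N, M)$ by the definition in \ref{ind-V}. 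The spectral sequence degenerates at $E_2$, producing the isomorphism $H^n(\T) \cong V^n(\bu, N, M)$.

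For (2) I would fix $p$ and instead look at $\Hom_A(P_p, \E)$. Since $P_p$ is projective, $\Hom_A(P_p, -)$ is exact and therefore commutes with cohomology, so
\[
H^q(\Hom_A(P_p, \E)) \cong \Hom_A(P_p, H^q(\E)) \cong \Hom_A(P_p, H^q(\bu, M)),
\]
where the second isomorphism uses that $\E$ is an injective resolution of $\Kc(\bu, M)$, so $H^q(\E) \cong H^q(\bu, M)$. Passing to the remaining (horizontal) cohomology on the $E_1$-page identifies ${}^{I}E_2^{pq}$ with the $p$-th cohomology of $\Hom_A(\Pb, H^q(\bu, M))$, which by the definition of $\Ext$ is $\Ext_A^p(N, H^q(\bu, M))$.

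The main work is bookkeeping, essentially just choosing consistent degree conventions for the two standard filtrations of a Hom double complex (as in \cite[2.7.4]{Weibel}), and I do not anticipate any real obstacle: one spectral sequence trivializes because injectives kill higher Ext of the resolution $\Pb$, the other reduces to the definition of $\Ext$ because projectives commute with cohomology.
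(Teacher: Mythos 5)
Your proposal is correct and follows essentially the same route as the paper: for the ${}^{II}$-spectral sequence you use injectivity of $E^p$ to identify ${}^{II}E_1^{pq}$ with $\Ext^q_A(N, E^p)$, which vanishes for $q>0$ and gives $\Hom_A(N,E^p)$ at $q=0$, so the $E_2$-page is $V^p(\bu,N,M)$ concentrated in row $q=0$; for the ${}^{I}$-spectral sequence you use projectivity of $P_p$ to commute $\Hom_A(P_p,-)$ past cohomology, giving $\Hom_A(P_p,H^q(\bu,M))$ at $E_1$ and hence $\Ext^p_A(N,H^q(\bu,M))$ at $E_2$. The only difference is that you spell out the exactness arguments a bit more fully, but the structure and key observations are identical to the paper's.
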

\begin{proof}
  (1) We have ${}^{II}E_{1}^{pq} = H^{q}\left(\Hom_A(\Pb, E^p)\right) = \Ext^q_A(N, E^p)$ as $E^p$ is an injective $A$-module. Thus
  ${}^{II}E_{1}^{pq} = 0$ for $q > 0$ and  ${}^{II}E_{1}^{p0} = \Hom_A(N, E^p)$ for $q = 0$. It follows that
   ${}^{II}E_{2}^{pq} = 0$ for $q > 0$ and  ${}^{II}E_{2}^{p0} = V^p(\bu, N, M)$ for $q = 0$. The result follows.

   (2) We have ${}^{I}E_{1}^{pq} = H^q(\Hom_R(P_p, \E))  = \Hom_A(P_p , H^q(\E)) $ (as $P_p$ is a projective $A$-module). Thus ${}^{I}E_{1}^{pq} = \Hom_A(P_p, H^q(\bu, M))$.
   It follows that ${}^{I}E_{2}^{pq} = \Ext^p_A(N, H^q(\bu, M))$.
\end{proof}

\s \label{ind-U} Let $\K(\bu, M)$ be the Koszul chain complex of $\bu = u_1, \ldots, u_l$ with coefficients in $M$. Let $\Pb$ be a projective resolution of $\K(\bu, M)$. Let $N$ be another finitely generated $A$-module. We denote by $U^i(\bu ,  M, N)$ the $i^{th}$-cohomology of the co-chain complex $\Hom_A(\Pb, N)$. By an argument similar to that in \ref{ind-V} we can show that these cohomology modules are independent of the projective resolution of $\K(\bu, M)$.

\s \label{s-ss} \emph{Construction of the second spectral sequence:} Let $\Pb \rt \K(\bu, M)$ be a projective resolution of $\K(\bu, M)$ with $P_n = 0$ for $n < 0$ and let $\E$ be an injective resolution of $N$.
We consider the Hom co-chain double complex $\D = \Hom(\Pb, \E)$; see \cite[2.7.4]{Weibel}. Set
\[
\D = \{D^{pq} \}_{p,q\geq 0} \quad \text{where} \quad  D^{pq} = \Hom(P_p, E^q).
\]
Set $\W  = \Tot^{\bigoplus}(\D)$ where
$$ W^{n} = \bigoplus_{p+q = n}D^{pq}.$$
The cohomology of $\W$ can be computed by using two standard spectral sequences. We first consider the spectral sequence
whose ${}^{I}E_{1}^{pq}$ term is given by \\  $ H^q(\Hom_A(P_p, \E))$.  Afterwards we consider the second standard spectral sequence whose ${}^{II}E_{1}^{pq}$  is given by $ H^{q}\left(\Hom_A(\Pb, E^p) \right)$.
We prove
\begin{proposition}\label{second-ss}
We have
\begin{enumerate}[\rm (1)]
  \item ${}^{I}E_{2}^{pq}$-collapses and yields $H^n(\W) \cong U^n(\bu, M, N)$.
  \item ${}^{II}E_{2}^{pq} = \Ext^p_A( H_q(\bu, M), N)$.
\end{enumerate}
\end{proposition}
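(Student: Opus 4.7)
The plan is to follow, step for step, the argument given for Proposition \ref{first-ss}, with the roles of the projective and injective resolutions appropriately exchanged. Both claims come from running the two standard spectral sequences of the first-quadrant bicomplex $\D$ and identifying their $E_2$-pages; the bicomplex is first-quadrant because of the chosen truncations $P_n = 0$ for $n < 0$ and $E^n = 0$ for $n < 0$, so convergence to $H^n(\W)$ is automatic.

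For part (1), I would compute the first spectral sequence. Since each $P_p$ is projective, $\Hom_A(P_p,-)$ is exact, and since $\E$ is an injective resolution of $N$,
\[
{}^{I}E_{1}^{pq} = H^q(\Hom_A(P_p,\E)) = \Ext^q_A(P_p, N),
\]
which equals $\Hom_A(P_p, N)$ for $q = 0$ and vanishes for $q > 0$. Passing to ${}^{I}E_{2}$ leaves only the bottom row, with ${}^{I}E_{2}^{p0} = H^p(\Hom_A(\Pb, N)) = U^p(\bu, M, N)$. The sequence collapses there and yields $H^n(\W) \cong U^n(\bu, M, N)$.

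For part (2), I would analyze the other spectral sequence. The key input is that $\Pb \rt \K(\bu, M)$ is a quasi-isomorphism in $K^{b,*}(\Proj A)$, so $H_q(\Pb) = H_q(\bu, M)$. Each $E^p$ is injective, so the contravariant functor $\Hom_A(-, E^p)$ is exact, and therefore
\[
{}^{II}E_{1}^{pq} = H^q(\Hom_A(\Pb, E^p)) = \Hom_A(H_q(\bu, M), E^p).
\]
Taking the subsequent differential, which is induced by $\E$, then gives
\[
{}^{II}E_{2}^{pq} = H^p(\Hom_A(H_q(\bu, M), \E)) = \Ext^p_A(H_q(\bu, M), N),
\]
because $\E$ resolves $N$.

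I do not expect a genuine obstacle here: the argument is entirely formal once the appropriate exactness statements are applied row by row and column by column. The only point that requires a little care is the identification $H_q(\Pb) = H_q(\bu, M)$, which is immediate from the definition of a projective resolution in $K^{b,*}(\Proj A)$ as recalled in Section 2, together with the fact that such resolutions are unique up to homotopy (so $U^\bullet(\bu, M, N)$ is well-defined, as already observed in \ref{ind-U}).
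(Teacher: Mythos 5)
Your proposal is correct and follows essentially the same argument as the paper's proof: identify ${}^{I}E_{1}^{pq}$ via the projectivity of $P_p$ and the injective resolution of $N$, conclude the collapse on the bottom row, and identify ${}^{II}E_{1}^{pq}$ via exactness of $\Hom_A(-, E^p)$ together with $H_q(\Pb) = H_q(\bu, M)$. The only difference is that you spell out the exactness justifications slightly more explicitly, but the decomposition and key identifications are identical.
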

\begin{proof}
(1) We have ${}^{I}E_{1}^{pq} =    H^q(\Hom_A(P_p, \E) = \Ext^q_A(P_p, N)$. It follows that ${}^{I}E_{1}^{pq} = 0$ for $q > 0$ and ${}^{I}E_{1}^{p0} = \Hom_A(P_p, N)$ for $q = 0$.
Thus ${}^{I}E_{2}^{pq} = 0 $ for $q > 0$ and ${}^{I}E_{2}^{pq} = U^p(\bu, M, N)$ for $q = 0$. The result follows.

(2) We have ${}^{II}E_{1}^{pq} =  H^{q}\left(\Hom_A(\Pb, E^p) \right) = \Hom_A( H_q(\Pb), E^p)$. Thus we have
${}^{II}E_{1}^{pq} = \Hom_A(H_q(\bu, M), E^p)$. It follows that ${}^{II}E_{2}^{pq} = \Ext^p_A( H_q(\bu, M), N)$.
\end{proof}
\s \label{ind-W} Let $\K(\bu, M)$ be the Koszul chain complex of $\bu = u_1, \ldots, u_l$ with coefficients in $M$. Let $\Pb$ be a projective resolution of $\K(\bu, M)$.  Let $N$ be another finitely generated $A$-module. We denote by $W_i(\bu ,  M, N)$ the $i^{th}$-homology of the chain complex $\Pb \otimes N$. By an argument similar to that in \ref{ind-V} we can show that these homology modules are independent of the projective resolution of $\K(\bu, M)$.

\s \label{t-ss} \emph{Construction of the third spectral sequence:} Let $\Pb \rt \K(\bu, M)$ be a projective resolution of $\K(\bu, M)$ with $P_n = 0$ for $n < 0$ and let $\F$ be a projective resolution of $N$.
We consider the double complex $\D = \Pb \otimes \F$. Set
\[
\D = \{D_{pq} \}_{p,q\geq 0} \quad \text{where} \quad  D_{pq} = P_p \otimes F_q.
\]
Set $\Y  = \Tot^{\bigoplus}(\D)$ where
$$ Y_{n} = \bigoplus_{p+q = n}D_{pq}.$$
The homology of $\Y$ can be computed by using two standard spectral sequences. We first consider the spectral sequence
whose ${}^{I}E^{1}_{pq}$ term is given by   $ H_q(P_p \otimes \F)$.  Afterwards we consider the second standard spectral sequence whose ${}^{II}E^{1}_{pq}$  is given by
$ H_{q}\left(\Pb \otimes F_p \right)$.

We prove
\begin{proposition}\label{third-ss}
We have
\begin{enumerate}[\rm (1)]
  \item ${}^{I}E^{2}_{pq}$-collapses and yields $H^n(\Y) \cong W_n(\bu, M, N)$.
  \item ${}^{II}E^{2}_{pq} = \Tor^A_p( H_q(\bu, M), N)$.
\end{enumerate}
\end{proposition}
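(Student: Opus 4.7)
The plan mirrors the proofs of Propositions \ref{first-ss} and \ref{second-ss}: I compute the two standard spectral sequences associated to the first-quadrant homology double complex $\D = \Pb \otimes \F$, and in each case I identify $E^1$, take one further differential, and read off the collapse.

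For part (1), first I would compute ${}^{I}E^{1}_{pq} = H_q(P_p \otimes \F)$. Since each $P_p$ is a finitely generated projective (hence flat) $A$-module, tensoring with $P_p$ is exact, so $H_q(P_p \otimes \F) \cong P_p \otimes H_q(\F)$. As $\F \rt N$ is a projective resolution, $H_q(\F) = 0$ for $q > 0$ and $H_0(\F) = N$. Hence ${}^{I}E^{1}_{pq} = 0$ for $q > 0$ and ${}^{I}E^{1}_{p0} = P_p \otimes N$. Taking the horizontal differential gives ${}^{I}E^{2}_{p0} = H_p(\Pb \otimes N) = W_p(\bu, M, N)$ while ${}^{I}E^{2}_{pq} = 0$ for $q > 0$. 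With only one nonzero row all higher differentials vanish, so the sequence collapses at $E^2$ and yields $H_n(\Y) \cong W_n(\bu, M, N)$.

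For part (2), I would compute ${}^{II}E^{1}_{pq} = H_q(\Pb \otimes F_p)$. Each $F_p$ is flat, so tensoring with $F_p$ is again exact, giving $H_q(\Pb \otimes F_p) \cong H_q(\Pb) \otimes F_p$. Since $\Pb \rt \K(\bu, M)$ is a projective resolution (i.e.\ a quasi-isomorphism from a complex of projectives), $H_q(\Pb) \cong H_q(\bu, M)$. Therefore ${}^{II}E^{1}_{pq} = H_q(\bu, M) \otimes F_p$, and taking horizontal homology in the $p$-direction computes the left-derived functors of $- \otimes N$ applied to $H_q(\bu, M)$, namely ${}^{II}E^{2}_{pq} = \Tor^A_p(H_q(\bu,M), N)$, as claimed.

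Nothing in this argument is truly difficult; the only step that requires genuine care is justifying that the bounded-below projective resolution $\Pb$ of the complex $\K(\bu, M)$ satisfies $H_q(\Pb) = H_q(\bu,M)$, and that the first-quadrant total-complex convergence (via $\Tot^{\bigoplus}$) is valid here. Both follow from the standard theory of Cartan--Eilenberg / projective resolutions of bounded complexes together with the boundedness of $\K(\bu, M)$, which guarantees that $\Pb$ can be chosen bounded (so that each diagonal of $\D$ is a finite sum and both spectral sequences converge to $H_*(\Y)$).
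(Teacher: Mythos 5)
Your proof is correct and follows essentially the same route as the paper: compute ${}^I E^1_{pq}$ using projectivity (flatness) of $P_p$ to see the first spectral sequence concentrates in the row $q=0$ and collapses to $W_*(\bu,M,N)$, and compute ${}^{II}E^1_{pq}$ using flatness of $F_p$ to get $H_q(\bu,M)\otimes F_p$ and hence ${}^{II}E^2_{pq} = \Tor^A_p(H_q(\bu,M),N)$. The only difference is cosmetic (the paper phrases the vertical homology as $\Tor^A_q(P_p,N)$ rather than invoking flatness of $P_p$ directly), and your closing remarks on $H_q(\Pb)\cong H_q(\bu,M)$ and boundedness of $\Pb$ are details the paper leaves implicit.
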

\begin{proof}
(1) We have ${}^{I}E^{1}_{pq} = H_q(P_p \otimes \F) = \Tor^A_q(P_p, N) $. So we have ${}^{I}E^{1}_{pq} = 0$ for $q > 0$ and ${}^{I}E^{1}_{p0} = P_p\otimes N$.
Therefore ${}^{I}E^{2}_{pq} =  0$ for $q > 0$ and ${}^{I}E^{2}_{p0} = W_p(\bu, M, N)$. The result follows.

(2) We have ${}^{II}E^{1}_{pq} = H_q(\Pb \otimes F_p) = H_q(\bu, M)\otimes F_p$. So we have ${}^{II}E^{2}_{pq} = \Tor^A_p( H_q(\bu, M), N)$.
\end{proof}
\section{Proof of Theorem \ref{prop}}
In this section we give a proof of Theorem \ref{prop}. We need the following preliminary facts.
\s\label{cone} Let $\bu = u_1, \ldots,u_m$ and let $\bu^\prime = u_1, \ldots, u_{m-1}$. Then we note that $\K(\bu)$ is the mapping cone of the multiplication by $u_m$ on $\K(\bu^\prime)$. Thus we have an exact sequence of chain complexes
\[
0 \rt \K(\bu^\prime) \rt \K(\bu) \rt \K(\bu^\prime)[-1] \rt 0.
\]
This induces a map of chain complexes
\[
0 \rt \K(\bu^\prime, M) \rt \K(\bu, M) \rt \K(\bu^\prime, M)[-1] \rt 0,
\]
and a map of co-chain complexes
\[
0 \rt \Kc(\bu^\prime, M)[-1] \rt \Kc(\bu, M) \rt \Kc(\bu^\prime, M) \rt 0.
\]
We note that $\Kc(\bu, M)[1]$ is the mapping cone of the multiplication by $u_m$ on $\Kc(\bu^\prime, M)$.
\begin{proof}[Proof of Theorem \ref{prop}]
(1) Let $\E$ be an injective resolution of $\Kc(\bu, M)$ and let $\eta \colon \Kc(\bu, M) \rt \E$ be the corresponding map. Let $a \in I$ and let
$\mu^K_a \colon \Kc(\bu, M) \rt \Kc(\bu, M)$ be multiplication by $a$. Let $\mu^E_a \colon \E \rt \E$ be multiplication by $a$. We have $\eta \circ \mu^K = \mu^E \circ \eta$.
By \cite[1.6.5]{BH} we have $\mu^K$ is null-homotopic. So $\eta \circ \mu^K = 0$ in $D^b(A)$. It follows that $\mu^E \circ \eta = 0$ in $D^b(A)$. But $\eta$ is an invertible map in $D^b(A)$.
So $\mu^E = 0$ in $D^b(A)$. However $\E$ is a complex of injectives with $E^n = 0$ for $n \ll 0$. It follows that $\mu^E$ is null-homotopic. Thus multiplication by $a$ in the complex $\Hom_A(N, \E)$ is also null-homotopic. The result follows.

The fact that $\ann M + \ann N \subseteq \ann V^*(\bu, N, M)$ is elementary and  is left to the reader.

(2) Let ${\E}^\prime $ be an injective resolution of $\Kc(\bu^\prime, M)$. The (shift of) Koszul co-chain complex $\Kc(\bu, M)[1]$ is the mapping cone of multiplication by $u_m$ on  $\Kc(\bu^\prime, M)$. It is readily verified that the mapping cone  of multiplication by $u_m$ on ${\E}^\prime $ is an injective resolution $\E$ of $\Kc(\bu, M)[1]$. We have an exact sequence
of complexes $0 \rt {\E}^\prime \rt \E \rt {\E}^\prime[1] \rt 0$. Applying $\Hom_A(N, -)$ and taking the long exact sequence in cohomology yields the required result.
\end{proof}

The following  two results can be proved in a similar way as in Theorem \ref{prop}
\begin{theorem}
  \label{prop-U}(with hypotheses as above)
  Set $I = (\bu)$. We have
  \begin{enumerate}[\rm (1)]
    \item $IU^*(\bu, M, N) = 0$. Furthemore $\ann M + \ann N \subseteq \ann U^*(\bu, M, N)$.
    \item  Set $\bu^\prime = u_1, \ldots, u_{m-1}$. Then we have a long exact sequence for all $i \in \Z$
    $$ \cdots \rt U^i(\bu,  M, N) \rt  U^i(\bu^\prime,  M, N)\xrightarrow{u_m} U^i(\bu^\prime, M, N) \rt U^{i+1}(\bu, M, N) \rt \cdots$$
  \end{enumerate}
\end{theorem}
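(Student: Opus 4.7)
The plan is to mirror the proof of Theorem \ref{prop} verbatim, with the injective resolution $\E$ of $\Kc(\bu, M)$ replaced throughout by a projective resolution $\Pb$ of the Koszul chain complex $\K(\bu, M)$, and with $\Hom_A(N, -)$ replaced by $\Hom_A(-, N)$.

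For part (1), I would fix a projective resolution $\eta \colon \Pb \rt \K(\bu, M)$ with $P_n = 0$ for $n \ll 0$. For $a \in I$, the chain maps $\mu^P_a$ and $\mu^K_a$ given by multiplication by $a$ satisfy $\eta \circ \mu^P_a = \mu^K_a \circ \eta$. By \cite[1.6.5]{BH}, $\mu^K_a$ is null-homotopic, hence zero in $D^b(A)$; since $\eta$ is invertible in $D^b(A)$, this forces $\mu^P_a = 0$ in $D^b(A)$. But $\Pb$ is a bounded-above complex of projectives, so vanishing in $D^b(A)$ coincides with null-homotopy as a chain map. Applying $\Hom_A(-, N)$ preserves null-homotopy, so multiplication by $a$ on $\Hom_A(\Pb, N)$ is null-homotopic, giving $a \cdot U^*(\bu, M, N) = 0$ and hence $IU^*(\bu, M, N) = 0$. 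The containment $\ann M + \ann N \subseteq \ann U^*(\bu, M, N)$ is elementary: elements of $\ann M$ annihilate $\K(\bu, M)$ termwise so the same argument applies, while elements of $\ann N$ annihilate $\Hom_A(\Pb, N)$ already at the co-chain level.

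For part (2), I would choose a projective resolution $\Pb^\prime \rt \K(\bu^\prime, M)$ and lift multiplication by $u_m$ on $\K(\bu^\prime, M)$ to a chain endomorphism $\tilde{u}_m \colon \Pb^\prime \rt \Pb^\prime$. By \ref{cone}, $\K(\bu, M)$ is the mapping cone of multiplication by $u_m$ on $\K(\bu^\prime, M)$, so $\cone(\tilde{u}_m)$ is a bounded-above complex of projectives quasi-isomorphic to $\K(\bu, M)$, and hence serves as a projective resolution $\Pb$ of $\K(\bu, M)$. The degreewise-split short exact sequence
\[
0 \rt \Pb^\prime \rt \Pb \rt \Pb^\prime[-1] \rt 0
\]
remains exact after applying $\Hom_A(-, N)$, and the resulting long exact sequence in cohomology is the desired one, once the connecting map is identified with multiplication by $u_m$.

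The only delicate point — and what I expect to be the main obstacle in (2) — is precisely this identification of the connecting homomorphism with multiplication by $u_m$ on $U^*(\bu^\prime, M, N)$. This is a standard feature of mapping-cone long exact sequences: up to sign the connecting map is induced by the lift $\tilde{u}_m$, and since $\tilde{u}_m$ lifts scalar multiplication by $u_m$ on $\K(\bu^\prime, M)$, and $U^*$ is an $A$-linear invariant of $\Pb^\prime$ independent of the chosen lift (by an argument parallel to \ref{ind-V}), it acts on $U^*(\bu^\prime, M, N)$ by the scalar $u_m$.
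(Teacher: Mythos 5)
Your proof is correct and, since the paper only says this theorem "can be proved in a similar way as in Theorem \ref{prop}", your write-up is a faithful implementation of the intended dualization. Two small remarks. First, in part (2) you do not actually need the comparison theorem to produce $\tilde{u}_m$: since the augmentation $\eta' \colon \Pb' \rt \K(\bu', M)$ is $A$-linear, literal scalar multiplication by $u_m$ on $\Pb'$ already commutes \emph{strictly} with $\eta'$, so $\cone(u_m\cdot|_{\Pb'}) \rt \cone(u_m\cdot|_{\K(\bu',M)}) = \K(\bu,M)$ is a quasi-isomorphism from a bounded-below complex of projectives, and the connecting map is visibly multiplication by $u_m$ with no further identification needed. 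If instead you take an abstract lift $\tilde{u}_m$, the square only commutes up to homotopy, and while this can be patched (by building the cone map using the homotopy), it is an unnecessary complication; your final paragraph's appeal to independence of $U^*$ from the lift is sound but the direct choice is cleaner and matches what the paper does for $\E'$. Second, a terminology nit: with $P_n = 0$ for $n \ll 0$ in homological (subscript) indexing, $\Pb$ is bounded \emph{below} as a chain complex (equivalently bounded above as a cochain complex); either way it is K-projective, which is the property you actually use to pass from "$\mu^P_a = 0$ in $D^b(A)$" to "$\mu^P_a$ null-homotopic."
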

\begin{theorem}
  \label{prop-W}(with hypotheses as above)
  Set $I = (\bu)$. We have
  \begin{enumerate}[\rm (1)]
    \item $IW_*(\bu, M, N) = 0$. Furthemore $\ann M + \ann N \subseteq \ann W_*(\bu, M, N)$.
    \item  Set $\bu^\prime = u_1, \ldots, u_{m-1}$. Then we have a long exact sequence for all $i \in \Z$
    $$ \rt W_i(\bu,  M, N) \rt  W_{i-1}(\bu^\prime,  M, N)\xrightarrow{u_m} W_{i-1}(\bu^\prime, M, N) \rt W_{i-1}(\bu, M, N) \rt$$
  \end{enumerate}
\end{theorem}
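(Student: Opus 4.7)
The plan is to mimic the proof of Theorem \ref{prop}, working now on the projective chain side: take a projective resolution $\Pb$ of the Koszul chain complex $\K(\bu, M)$ as in \ref{s-ss}, apply $- \otimes_A N$, and read off homology. The inputs are exactly the same as before: null-homotopy of multiplication by an element of $I$ on $\K(\bu, M)$ via \cite[1.6.5]{BH}, and the mapping-cone description of $\K(\bu)$ as $\cone(u_m)$ acting on $\K(\bu^\prime)$ recalled in \ref{cone}.

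For (1), fix $a \in I$ and let $\mu^K_a$, $\mu^P_a$ denote multiplication by $a$ on $\K(\bu, M)$ and on $\Pb$ respectively. By \cite[1.6.5]{BH}, $\mu^K_a$ is null-homotopic, hence zero in $D^b(A)$. If $\eta \colon \Pb \rt \K(\bu, M)$ is the structural quasi-isomorphism, then $\eta \circ \mu^P_a = \mu^K_a \circ \eta = 0$ in $D^b(A)$, and invertibility of $\eta$ there forces $\mu^P_a = 0$ in $D^b(A)$. Because $\Pb$ is a chain complex of projectives with $P_n = 0$ for $n < 0$, it is K-projective, so $\mu^P_a$ is in fact null-homotopic as a chain map. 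Tensoring a null-homotopy with $1_N$ yields a null-homotopy for $\mu^P_a \otimes 1$ on $\Pb \otimes N$, so $a$ annihilates $W_*(\bu, M, N)$. The additional inclusion $\ann M + \ann N \subseteq \ann W_*(\bu, M, N)$ is elementary: elements of $\ann M$ act as zero on every term of $\K(\bu, M)$, and elements of $\ann N$ act as zero on every term of $\Pb \otimes N$.

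For (2), choose a projective resolution $\Pb^\prime \rt \K(\bu^\prime, M)$ and lift multiplication by $u_m$ to a chain endomorphism $\tilde u_m \colon \Pb^\prime \rt \Pb^\prime$. Then $\cone(\tilde u_m)$ is a bounded-below chain complex of projectives whose natural map to $\cone(u_m) = \K(\bu, M)$ (by \ref{cone}) is a quasi-isomorphism; it therefore serves as a projective resolution $\Pb$ of $\K(\bu, M)$. The mapping-cone short exact sequence
\[
0 \rt \Pb^\prime \rt \Pb \rt \Pb^\prime[-1] \rt 0
\]
is degree-wise split, hence remains exact after applying $- \otimes_A N$. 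The long exact sequence in homology is the asserted one, with connecting homomorphism $W_{i-1}(\bu^\prime, M, N) \rt W_{i-1}(\bu^\prime, M, N)$ identified (up to sign) with multiplication by $u_m$ via the standard description of the connecting map of a mapping-cone triangle. No serious obstacle arises: the only care needed is bookkeeping with degree shifts and sign conventions when passing from the cone of a lift to a lift of the cone, and verifying that the resulting $\Pb$ still lies in $K^{b, *}(\Proj A)$---the finiteness of $H_*(\bu, M)$ being inherited from $\K(\bu, M)$.
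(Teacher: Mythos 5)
Your proposal is correct and is exactly the dual (projective/tensor) version of the paper's proof of Theorem \ref{prop}, which is what the paper intends when it says \ref{prop-W} ``can be proved in a similar way.'' One small simplification in part (2): multiplication by $u_m$ is already a chain endomorphism of $\Pb^\prime$ that commutes strictly with the augmentation $\Pb^\prime \rt \K(\bu^\prime, M)$, so no comparison-theorem lift is required---this matches the paper's own phrasing in the proof of \ref{prop}, where it forms the mapping cone of multiplication by $u_m$ on the resolution directly rather than lifting.
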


\begin{remark}
  It is convenient to set $\K(\emptyset, M) = \Kc(\emptyset, M) = M$. If $\bu = u$ then we can set $\bu^\prime = \emptyset$. Then note that the long exact sequence in \ref{prop}, \ref{prop-U}
  and \ref{prop-W} still holds.
\end{remark}

\section{Proof of Theorem \ref{hyp}}
In this section we give a proof of Theorem \ref{hyp}. We need to prove a few preliminary results. Let $\mu_i(\m, M) = \dim \Ext^i_A(k. M)$ denote the $i^{th}$ Bass number of $M$ with respect to $\m$.
We first prove
\begin{proposition}\label{bass}
Let $(A, \m)$ be a \CM \ local ring and let $M$ be an MCM $A$-module. Set $k = A/\m$. Assume $\injdim M = \infty$. Let $\bu = u_1, \ldots, u_m$. Then
$$ \dim_k V^i(\bu, k, M) = \begin{cases}
                             0, & \mbox{if } i < d \\
                             \mu_{d}(\m, M), & \mbox{if }  i = d \\
                             m\mu_{d}(\m, M) + \mu_{d+1}(\m, M), & \mbox{if }  i = d+1.
                           \end{cases}$$
\end{proposition}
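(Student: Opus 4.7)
The plan is to prove the proposition by induction on $m = |\bu|$, with Theorem \ref{prop} doing all the real work.

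For the base case $m = 0$, the convention $\Kc(\emptyset, M) = M$ gives $V^i(\emptyset, k, M) = \Ext^i_A(k, M)$, which has dimension $\mu_i(\m, M)$. Since $M$ is MCM, $\depth M = d$ forces $\Ext^i_A(k, M) = 0$ for $i < d$, and the values at $i = d$ and $i = d+1$ match the statement (with $m=0$, the formula reads $0 \cdot \mu_d + \mu_{d+1} = \mu_{d+1}$, which is exactly $\dim_k \Ext^{d+1}_A(k, M)$).

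For the inductive step, set $\bu^\prime = u_1, \ldots, u_{m-1}$. By Theorem \ref{prop}(1) applied to $\bu^\prime$, $\ann M + \ann k \subseteq \ann V^*(\bu^\prime, k, M)$, so $u_m \in \m = \ann k$ annihilates $V^*(\bu^\prime, k, M)$. The long exact sequence of Theorem \ref{prop}(2),
\[
\cdots \rt V^i(\bu, k, M) \rt V^i(\bu^\prime, k, M) \xrightarrow{u_m} V^i(\bu^\prime, k, M) \rt V^{i+1}(\bu, k, M) \rt \cdots,
\]
therefore has all multiplication maps zero and breaks into short exact sequences
\[
0 \rt V^{i-1}(\bu^\prime, k, M) \rt V^i(\bu, k, M) \rt V^i(\bu^\prime, k, M) \rt 0.
\]
Hence $\dim_k V^i(\bu, k, M) = \dim_k V^{i-1}(\bu^\prime, k, M) + \dim_k V^i(\bu^\prime, k, M)$. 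Feeding in the inductive hypothesis for $\bu^\prime$ yields the three desired values: for $i < d$ both summands vanish; for $i = d$ one gets $0 + \mu_d = \mu_d$; and for $i = d+1$ one gets $\mu_d + \bigl[(m-1)\mu_d + \mu_{d+1}\bigr] = m \mu_d + \mu_{d+1}$.

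There is no serious obstacle here; the only thing one has to notice is that $\m$ (not just $I$) annihilates $V^*(\bu^\prime, k, M)$, which makes the connecting-map portion of Theorem \ref{prop}(2) trivial and turns the long exact sequence into a clean dimension-additivity formula. The hypothesis $\injdim M = \infty$ is not used in this computation; its role is presumably to guarantee that $\mu_{d+1}(\m, M) > 0$ in intended applications, so that the formula records a genuinely new contribution beyond $\mu_d(\m, M)$.
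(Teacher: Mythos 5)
Your proof is correct and takes essentially the same inductive approach as the paper, which states the short exact sequence more tersely but relies on the identical splitting of the long exact sequence via $\m$ annihilating $V^*(\bu',k,M)$. Your closing remark that $\injdim M = \infty$ is not actually used in this computation (the formula holds trivially with $\mu_{d+1}=0$ when $\injdim M < \infty$) is also accurate.
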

\begin{proof}
We prove by induction on $m$. When $m = 0$ then note that $V^i(\emptyset, k, M) = \Ext^i_A(k, M)$. In this case the result clearly holds.

We assume the result for $\bu^\prime = u_1, \ldots, u_{m-1}$ and prove for $\bu$.
By \ref{prop} we have an exact sequence for all $i$
\[
0 \rt V^{i-1}(\bu^\prime, k, M) \rt V^{i}(\bu, k, M) \rt V^{i}(\bu^\prime, k, M) \rt 0.
\]
The result follows by induction.
\end{proof}
Next we show
\begin{proposition}\label{bass-mu}
Let $(A, \m)$ be a \CM \ local ring and let $M$ be an MCM $A$-module. Set $k = A/\m$, $d = \dim A$ and $e = \embdim(A)$. Assume $\injdim M = \infty$. Let $\bu = u_1, \ldots, u_e$ generate $\m$ minimally. Then
$$ \mu_{d+1}(\m, M) \leq \dim_k H^{d+1}(\m, M). $$
\end{proposition}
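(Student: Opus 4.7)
The plan is to apply the first spectral sequence of Theorem~\ref{spectral}(1) with $N = k$, namely
\[
{}^{I}E_2^{pq} = \Ext_A^p(k, H^q(\bu, M)) \Rightarrow V^{p+q}(\bu, k, M),
\]
and then compare convergence in total degree $d+1$ with the dimension $\dim_k V^{d+1}(\bu, k, M) = e\mu_d(\m, M) + \mu_{d+1}(\m, M)$ already computed in Proposition~\ref{bass}. The inequality for $\mu_{d+1}(\m, M)$ should then fall out by counting dimensions on the $E_2$-page.

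Two preparatory observations are the main input. First, since $M$ is MCM of depth $d$ and $\bu$ has length $e$, we have $H_i(\bu, M) = 0$ for $i > e - d$; Koszul self-duality $H^q(\bu, M) \cong H_{e-q}(\bu, M)$ then forces ${}^{I}E_2^{pq} = 0$ for all $q < d$. Second, by Theorem~\ref{prop}(1), $\m \cdot H^q(\bu, M) = 0$, so each $H^q(\bu, M)$ is a $k$-vector space. Combined with the standard fact $\dim_k \Ext^1_A(k, k) = e$, this yields the identifications
\[
\Ext_A^0(k, H^q(\bu, M)) \cong H^q(\bu, M), \qquad \dim_k \Ext_A^1(k, H^q(\bu, M)) = e\cdot \dim_k H^q(\bu, M).
\]

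Because of the floor $q \geq d$ on the $E_2$-page, the spectral sequence is essentially degenerate in low total degrees. In total degree $d$ only $E_2^{0,d}$ can contribute, and all differentials into or out of that position land where $q < d$, so $V^d(\bu, k, M) \cong H^d(\bu, M)$; together with Proposition~\ref{bass} this gives $\dim_k H^d(\bu, M) = \mu_d(\m, M)$. In total degree $d+1$ only $E_\infty^{0, d+1}$ and $E_\infty^{1, d}$ can contribute, and each is bounded above by the corresponding $E_2$-term; so
\[
e\mu_d(\m, M) + \mu_{d+1}(\m, M) = \dim_k V^{d+1}(\bu, k, M) \leq \dim_k H^{d+1}(\bu, M) + e\cdot \mu_d(\m, M),
\]
and cancelling $e\mu_d(\m, M)$ delivers the desired bound.

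The only point requiring care is confirming that higher differentials into or out of the positions $(0,d)$, $(0, d+1)$ and $(1,d)$ are forced to vanish for bookkeeping reasons; but this is immediate from the vanishing $E_2^{pq} = 0$ for $q < d$ combined with the first-quadrant constraint $p \geq 0$, so I do not foresee any serious obstacle beyond being careful about this one degeneration.
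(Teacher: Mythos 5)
Your argument is correct and follows the paper's proof almost verbatim: both use the first spectral sequence $\Ext_A^p(k,H^q(\bu,M)) \Rightarrow V^{p+q}(\bu,k,M)$, the depth-sensitivity floor $E_2^{pq}=0$ for $q<d$, the identity $\dim_k H^d(\bu,M)=\mu_d(\m,M)$ extracted from total degree $d$, and the two-step filtration in total degree $d+1$ compared against Proposition~\ref{bass}. One small imprecision in your closing remark: the differential $d_2\colon E_2^{0,d+1}\to E_2^{2,d}$ is \emph{not} forced to vanish (its target has $q=d$, not $q<d$), so $E_\infty^{0,d+1}$ need not equal $E_2^{0,d+1}$ --- but since your displayed estimate only uses $\dim E_\infty^{0,d+1}\le \dim E_2^{0,d+1}$, which holds for any subquotient, this does not affect the proof.
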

\begin{proof}
Let $\m = (u_1, \ldots, u_e)$. By \ref{first-ss} we have
\[
 {}^{I}E_{2}^{pq} = \Ext^p_A(k, H^q(\bu, M))  \Rightarrow V^{p+q}(\bu, k, M).
\]
We note that
\[
{}^{I}E_{2}^{1, d} = {}^{I}E_{\infty}^{1, d}.
\]
So $\dim {}^{I}E_{\infty}^{1, d} = e\mu_d(\m, M)$.
We also  have an exact sequence
\[
0 \rt {}^{I}E_{3}^{0, d+1} \rt  {}^{I}E_{2}^{0, d+1} \rt   {}^{I}E_{2}^{2, d}.
\]
It is readily verified that
\[
{}^{I}E_{3}^{0, d+1} = {}^{I}E_{\infty}^{0, d+1}.
\]
Thus $\dim {}^{I}E_{\infty}^{0, d + 1} \leq \dim_k H^{d+1}(\m, M)$.
Notice when $p + q = d +1$ and $p \neq 0, 1$ then  ${}^{I}E_{2}^{pq} = 0$.
So
we have an exact sequence
\[
0 \rt {}^{I}E_{\infty}^{1, d}  \rt V^{d+1}(\m, k, M) \rt {}^{I}E_{\infty}^{0, d+1} \rt 0.
\]
Thus we have by \ref{bass}
\[
e\mu_d(\m, M) + \mu_{d+1}(\m, M) \leq e\mu_d(\m, M) + \dim_k H^{d+1}(\m, M).
\]
The result follows.
\end{proof}
Finally we give
\begin{proof}[Proof of Theorem \ref{hyp}]
We first show that $A$ is  Gorenstein.  We may assume that $A$ is  complete. Suppose if possible $A$ is \emph{not} Gorenstein. Let $\omega$ be the canonical module of $A$. As $A$ is reduced we have that $\omega$ has a rank, see \cite[3.3.18]{BH}. Note $\rank \omega = 1$.

As $A$ is not Gorenstein we have $\injdim A = \infty$.
By \ref{bass-mu} we have $\mu_{d+1}(\m, A) \leq 1$. By an exercise problem, \cite[3.5.12(b)]{BH}, we get  $\mu_{j}(\m, A) \neq 0$ for all $j > d$. Thus  $\mu_{d+1}(\m, A)  = 1$.

Let $E$ be the injective hull of $k$.
Let $\Gamma_\m(-)$ be the $\m$-torsion functor. Then we have $H^i_m(A) = 0$ for $i <  d$ and $i > d$. We also have $H^d_\m(A) \neq 0$.
Let $\I$ be the minimal injective resolution of $A$. Set $a_i = \mu_i(\m, M)$. Taking $\Gamma_\m(\I)$ we get an exact sequence
$$ 0 \rt H^{d}_\m(A)  \rt E^{a_d} \rt E \rt E^{a_{d+2}} \rt \cdots$$
Dualizing  with respect to $E$ we obtain an exact sequence
\[
\cdots A^{a_{d+2}} \xrightarrow{\phi} A \rt A^{a_d} \rt \omega \rt 0.
\]
We note $\projdim \omega = \infty$.
It is elementary to note that $\rank \image \phi = 1$. So $\rank \coker \phi  = 0$ and it is non-zero. It follows that $A^{a_d}$ has a submodule of dimension $< d$ which is a contradiction.
Thus $A$ is Gorenstein.

We now have to show $A$ is a hypersurface. Suppose it is not. Then note $e = \embdim(A) \geq d + 2$. We also have that $H_*(A)$ is a Poincare algebra, see \cite[3.4.5]{BH}. It follows that $H_1(A) \cong k$. It follows that $A$ is a hypersurface ring.
\end{proof}

\section{Proof of Theorem \ref{chi}}
In this section we assume $A$ is a regular local ring. Recall Serre's vanishing Theorem, see \cite[p.\ 106]{S} for the equi-characteristic case and
\cite[5.6]{GS} and \cite[Theorem 1]{R} in general;
Let $M, N$ be finitely gnerated $A$-modules with $M \otimes N$ having finite length.
If $\dim  M + \dim N < \dim A$ then $$\sum_{i \geq 0}(-1)^i\ell(\Tor^A_i(M, N)) = 0.$$
We now give
\begin{proof}[Proof of Theorem \ref{chi}]
We note that (i) and (ii) are equivalent by Serre's Theorem \cite[4.7.6]{BH}. We prove (i).
Let $I = (u_1, \ldots, u_m)$. Let $\K = \K(\bu, A)$. We may take the projective resolution of $\K$ to be $\K$ itself. It follows that
$W_i(\bu, A, M) = H_i(\K(\bu, M))$ for $i \geq 0$. By \ref{third-ss} we have a homology spectral sequence
$$E^2_{pq} = \Tor^A_p(H_q(I), M) \Rightarrow H_{p+q}(\bu, M). $$
We note that $H_q(I)$ are $A/I$-modules and so $\dim M + \dim H_q(I) < \dim A$. So by Serre vanishing Theorem we have
$\sum_{p \geq 0}(-1)^p\ell(\Tor^A_p(H_q(I), M)) = 0.$
So we have
\[
\sum_{p,q} (-1)^{p+q} \ell(E^2_{pq}) = \sum_{q} (-1)^{q} \left(\sum_p (-1)^p\ell(E^2_{pq}) \right) = 0.
\]
By preservation of Euler characteristics in a spectral sequence, \cite[1.F, p.\ 15]{M},  we have
\[
\sum_{p,q} (-1)^{p+q} \ell(E^2_{pq}) = \sum_{p,q} (-1)^{p+q} \ell(E^\infty_{pq})
\]
We have
$$\sum_{i \geq 0}(-1)^i \ell(H_i(\bu, M)) = \sum_{i \geq 0} \sum_{p+q = i}(-1)^{p+q} \ell(E^\infty_{pq})  = 0.$$
The result follows.
\end{proof}
\section{Proof of Theorem \ref{perfect}}
In this section we give a proof of Theorem \ref{perfect}. The proof splits into two parts. In the first part we do not assume $A$ is Gorenstein. In the second part we assume that $A$ is Gorenstein.

\begin{proof}[Proof of Theorem \ref{perfect}, first part]
Let $I = (u_1, \ldots, u_m)$ (minimally). Set $\K = \K(\bu, A)$. We take the projective resolution of $\K$ to be itself.
It follows that $U^i(\bu, A, M) = H^i(\bu, M)$.   By \ref{second-ss} we have a convergent spectral sequence
\[
E_2^{p,q} = \Ext^p_A(H_q(\bu), M) \Rightarrow H^{p+q}(\bu, M).
\]
Note that $H_q(\bu, A)$ are annhilated by $I$. So $\Ext^i_A(H_q(\bu), M) = 0$ for $i < g$. We also have that as $H_i(\bu)$ is perfect for $i \leq r-1$ it follows that $\Ext^j(H_i(\bu), M) = 0$ for $j > g$ (whenever $i \leq r -1$).
So we have
\begin{enumerate}
  \item $E_2^{g,r} = \Ext_A^g(H_r(\bu), M) = E_\infty^{g,r}$.
  \item  Let $p + q = g +r$.
  \begin{enumerate}
    \item If $q \leq r-1$ then $p \geq g +1$. So $E_2^{p,q} = 0$.
    \item   If $q \geq r + 1$ then $p \leq g -1$. So $E_2^{p,q} = 0$.
  \end{enumerate}
\end{enumerate}
It follows that $H^{r+g}(I, M) \cong E^{g,r}_\infty = \Ext_A^g(H_r(\bu), M)$.
\end{proof}

Next we prove the second part of Theorem \ref{perfect}. We need a few preliminary results.
\begin{lemma}\label{p-gor}
Let $(A,\m)$ be a Gorenstein local ring and let $M$ be a MCM $A$-module. Let $I = (\bu)$ be an ideal in $A$ with $\projdim_A H^j(I)$ finite for all $j$. Then $V^j(\bu, M, A) = \Hom_A(M, H^j(I))$ for all $j \geq 0$.
\end{lemma}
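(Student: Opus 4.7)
The plan is to invoke the first spectral sequence of Proposition \ref{first-ss} with the roles set so that $N=M$ and the ``$M$'' of that proposition is $A$, and then show that the spectral sequence degenerates. This gives
$$ E_2^{p,q} = \Ext^p_A(M, H^q(\bu, A)) = \Ext^p_A(M, H^q(I)) \Longrightarrow V^{p+q}(\bu, M, A).$$
If one can verify that $E_2^{p,q}=0$ for every $p \geq 1$, then the spectral sequence collapses onto the column $p=0$ and yields $V^j(\bu, M, A) \cong E_2^{0,j} = \Hom_A(M, H^j(I))$, which is the desired identification.

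The key step is therefore the vanishing claim: for every $q\geq 0$ and every $p\geq 1$,
$$ \Ext^p_A(M, H^q(I)) = 0.$$
I would prove the more general statement that over a Gorenstein local ring $A$, if $M$ is MCM and $N$ is any finitely generated $A$-module of finite projective dimension, then $\Ext^p_A(M, N) = 0$ for all $p \geq 1$. The starting point is that over a Gorenstein ring $M$ is MCM iff $\Ext^i_A(M, A) = 0$ for all $i > 0$ (this is standard, e.g.\ from \cite[3.3.7]{BH}), and hence $\Ext^i_A(M, F) = 0$ for every finitely generated free $F$ and every $i > 0$. The vanishing for arbitrary $N$ of finite projective dimension then follows by induction on $\projdim_A N$: write a short exact sequence $0 \to K \to F \to N \to 0$ with $F$ free and $\projdim_A K = \projdim_A N - 1$, and chase the long exact sequence of $\Ext_A(M, -)$; for $i \geq 1$ the outer terms vanish and one obtains $\Ext^i_A(M, N) \cong \Ext^{i+1}_A(M, K)$, which vanishes by induction. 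Since by hypothesis $\projdim_A H^q(I) < \infty$ for every $q$, the claim applies with $N = H^q(I)$.

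The main (and essentially only) obstacle is confirming that the correct incarnation of Proposition \ref{first-ss} is available here, i.e.\ that the spectral sequence really converges to $V^{p+q}(\bu, M, A)$ with $N=M$ in that proposition's notation; this is just bookkeeping. Once the collapse is justified, no further calculation is needed: the $E_\infty$-page is concentrated in the column $p=0$, so the filtration on $V^j(\bu, M, A)$ has only one nonzero piece, producing the isomorphism $V^j(\bu, M, A) \cong \Hom_A(M, H^j(I))$ naturally in $j$.
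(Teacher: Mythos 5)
Your proposal is correct and follows the same route as the paper: apply Proposition \ref{first-ss} with $N=M$ and coefficient module $A$ to get the spectral sequence $\Ext^p_A(M, H^q(I)) \Rightarrow V^{p+q}(\bu, M, A)$, then observe that the $E_2$-page is concentrated in column $p=0$ because $\projdim_A H^q(I)<\infty$ and $M$ is MCM over a Gorenstein ring. The paper states the vanishing $\Ext^p_A(M, H^q(I))=0$ for $p>0$ without proof; you correctly fill in the (standard) inductive argument from $\Ext^{>0}_A(M,A)=0$.
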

\begin{proof}
We have by \ref{first-ss} that $E_2^{pq} = \Ext_A^p(M, H^q(\bu)) \Rightarrow V^{p+q}(\bu, M, A)$.
As \\ $\projdim H^q(\bu)$ is finite we get that $E_2^{pq} = 0 $ for $p > 0$. So the spectral sequence collapses. The result follows.
\end{proof}
The second preliminary result that we need is the following:
\begin{lemma}\label{vext}
Let $A$ be a Noetherian  ring and let $M, N$ be  $A$-modules. Assume $\Ext^i_A(M, N) = 0$ for $i > 0$. Then we have an isomorphism \\
$V^i(\bu, M, N)  \cong H^i(\bu, \Hom_A(M, N))$.
\end{lemma}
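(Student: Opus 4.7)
The plan is to identify both $V^i(\bu, M, N)$ and $H^i(\bu, \Hom_A(M, N))$ with the total cohomology of one and the same double complex. Let $\Pb \to M$ be a projective resolution and let $\E$ be an injective resolution of $\Kc(\bu, N)$ with $E^q = 0$ for $q < 0$. First, following the argument in \ref{first-ss}(1) with the roles of $M$ and $N$ interchanged, I would identify $V^n(\bu, M, N)$ with $H^n$ of $\Tot^{\bigoplus}(\Hom_A(\Pb, \E))$: the spectral sequence filtering by $\E$-degree collapses because injectivity of each $E^q$ forces $\Ext_A^p(M, E^q) = 0$ for $p > 0$, leaving only the row $\Hom_A(M, \E)$, whose cohomology is $V^*(\bu, M, N)$ by definition.

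Next I would introduce the smaller double complex $\C^{pq} = \Hom_A(P_p, K^q(\bu, N))$ and compare it to $\Hom_A(\Pb, \E)$ through the map induced by the quasi-isomorphism $\Kc(\bu, N) \to \E$. In the vertical-first spectral sequence of each double complex, projectivity of each $P_p$ makes $\Hom_A(P_p, -)$ exact, so the $E_1$-page reduces in both cases to $\Hom_A(P_p, H^q(\bu, N))$; the comparison map is an isomorphism on $E_1$ and hence on the abutment. This gives $H^n(\Tot^{\bigoplus} \C) \cong V^n(\bu, M, N)$.

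Finally I would compute $H^*(\Tot^{\bigoplus} \C)$ using the horizontal-first spectral sequence of $\C$. Since $K^q(\bu, N) \cong N^{\binom{m}{q}}$ as an $A$-module, we have $E_1^{pq} = \Ext_A^p(M, N)^{\binom{m}{q}}$, which by hypothesis vanishes for $p > 0$. The spectral sequence therefore collapses onto the row $p = 0$, which is the complex $\Hom_A(M, \Kc(\bu, N))$. The natural isomorphism $\Hom_A(M, \bigwedge^q A^m \otimes_A N) \cong \bigwedge^q A^m \otimes_A \Hom_A(M, N)$, valid because $\bigwedge^q A^m$ is free of finite rank, identifies this complex termwise with $\Kc(\bu, \Hom_A(M, N))$; one checks that the differentials agree under this identification since the Koszul differentials are $A$-linear and multiplication by $u_i$ on $N$ corresponds to multiplication by $u_i$ on $\Hom_A(M, N)$. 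Combining yields $V^n(\bu, M, N) \cong H^n(\bu, \Hom_A(M, N))$. The only non-formal step, and hence the main thing to verify carefully, is the compatibility of the differential after the collapse; the rest is a routine comparison of bounded first-quadrant spectral sequences.
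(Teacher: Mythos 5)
Your proof is correct, but it takes a genuinely different route from the paper's.

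The paper argues directly with the quasi-isomorphism $\eta \colon \Kc(\bu,N) \to \E$ and never introduces a projective resolution of $M$: it forms $\cone(\eta)$, observes that each term of the cone is a direct sum of an injective module and copies of $N$, so that the hypothesis $\Ext^i_A(M,N)=0$ for $i>0$ (together with injectivity) gives $\Ext^i_A(M,\cone(\eta)^n)=0$ for $i>0$ and all $n$, and then invokes the elementary acyclicity statement \ref{d-ex} to conclude that $\Hom_A(M,\cone(\eta))$ is acyclic. The short exact sequence of complexes $0 \to \Hom_A(M,\E) \to \Hom_A(M,\cone(\eta)) \to \Hom_A(M,\Kc)[1] \to 0$ then forces $H^i(\Hom_A(M,\E)) \cong H^i(\Hom_A(M,\Kc))$, and the paper finishes with the Hom-tensor adjunction $\Hom(M,\Hom(\K(\bu),N)) \cong \Hom(\K(\bu),\Hom(M,N))$, which identifies complexes with their differentials in one stroke. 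You instead introduce $\Pb \to M$ and run two first-quadrant double-complex spectral sequences: one collapse identifies $H^*(\Tot\Hom(\Pb,\E))$ with $V^*(\bu,M,N)$, a comparison map (iso on $E_1$ by projectivity of $P_p$) replaces $\E$ by $\Kc(\bu,N)$ on the abutment, and a second collapse (using $\Ext^{>0}_A(M,N)=0$) lands on $\Hom_A(M,\Kc(\bu,N))$. Both arguments reduce to the same intermediate isomorphism $V^i(\bu,M,N) \cong H^i(\Hom_A(M,\Kc(\bu,N)))$; the paper's version is shorter and avoids a projective resolution of $M$, while yours stays inside the spectral-sequence framework of Section 2 (indeed your Step 1 is literally \ref{first-ss} with $M$ and $N$ swapped), at the cost of the final termwise identification requiring a check on differentials — precisely the step the paper's adjunction makes automatic. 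One small point to tighten: it would be cleaner to phrase Step 4 via the adjunction $\Hom(M,\Hom(\K(\bu),N)) \cong \Hom(\K(\bu),\Hom(M,N))$ rather than a termwise isomorphism plus a separate compatibility check of the differentials.
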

To prove this we need the following easily proved fact.
\begin{proposition}\label{d-ex}
Let $\C$ be a bounded below co-chain complex (i.e., $\C^n = 0$ for $n \ll 0$). Assume $H^*(\C)  = 0$. Suppose $M$ is an $A$-module with $\Ext^i_A(M, \C^n) = 0$ for all $i > 0$
and for all $n \in \Z$. Then $H^*(\Hom_A(M, \C)) = 0$.
\end{proposition}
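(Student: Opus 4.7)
The plan is to bootstrap from the bounded below hypothesis by writing the acyclic complex $\C$ as a splice of short exact sequences and then inductively proving vanishing of all higher $\Ext$'s from $M$ into the cocycle modules. Concretely, let $Z^n = \ker(d^n \colon \C^n \rt \C^{n+1})$. Since $H^*(\C) = 0$, the image of $d^{n-1}$ equals $Z^n$, so we obtain short exact sequences
\[
0 \rt Z^n \rt \C^n \rt Z^{n+1} \rt 0 \quad \text{for every } n \in \Z.
\]
Pick $N$ with $\C^n = 0$ for $n < N$; then $Z^N = 0$.

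Next, I would prove by induction on $n \geq N$ that $\Ext^i_A(M, Z^n) = 0$ for every $i \geq 1$. The base case $n = N$ is trivial. For the inductive step, applying $\Hom_A(M,-)$ to the short exact sequence above produces a long exact sequence in $\Ext^*_A(M, -)$, and the hypothesis $\Ext^i_A(M, \C^n) = 0$ for $i > 0$ collapses it into isomorphisms
\[
\Ext^i_A(M, Z^{n+1}) \cong \Ext^{i+1}_A(M, Z^n) \quad \text{for all } i \geq 1,
\]
which vanish by the inductive hypothesis. Thus $\Ext^i_A(M, Z^n) = 0$ for all $i \geq 1$ and all $n$.

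Having established this, applying $\Hom_A(M,-)$ to the short exact sequence $0 \rt Z^n \rt \C^n \rt Z^{n+1} \rt 0$ and using $\Ext^1_A(M, Z^n) = 0$ yields a short exact sequence
\[
0 \rt \Hom_A(M, Z^n) \rt \Hom_A(M, \C^n) \rt \Hom_A(M, Z^{n+1}) \rt 0.
\]
Splicing these short exact sequences back together recovers the complex $\Hom_A(M, \C)$, with $\Hom_A(M, Z^n)$ playing the role of both the cycles in degree $n$ and the image of the differential out of degree $n-1$. Hence $\Hom_A(M, \C)$ is acyclic.

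The only potential obstacle is the starting point of the induction, which is precisely where the bounded below assumption is needed; without it one cannot anchor the inductive argument on the $Z^n$. Everything else is a standard dimension-shifting argument using the long exact sequence of $\Ext$.
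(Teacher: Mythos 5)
Your argument is correct. The paper gives no proof of this proposition---it is introduced only as an ``easily proved fact''---and the standard dimension-shifting argument you supply is exactly the one a reader is expected to fill in: the bounded-below hypothesis anchors the induction at $Z^N = 0$, the vanishing $\Ext^i_A(M, \C^n) = 0$ for $i > 0$ transports vanishing of higher $\Ext$'s up the cocycle modules via $\Ext^i_A(M, Z^{n+1}) \cong \Ext^{i+1}_A(M, Z^n)$, and the resulting short exact sequences $0 \rt \Hom_A(M, Z^n) \rt \Hom_A(M, \C^n) \rt \Hom_A(M, Z^{n+1}) \rt 0$ splice back to give acyclicity of $\Hom_A(M, \C)$, since $\ker(\Hom_A(M, d^n)) = \Hom_A(M, Z^n)$ coincides with the image of $\Hom_A(M, d^{n-1})$ by the established surjectivity.
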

We now give
\begin{proof}[Proof of Lemma \ref{vext}]
Let $\Kc = \Kc(\bu, N)$. Let $\E$ be an injective resolution of $\Kc$ and let $\eta \colon \Kc \rt \E$ be a quism and set $\C = \cone(\eta)$.
We have a short exact sequence of complexes $0 \rt \E \rt \C \rt \Kc[1] \rt 0$. As $\E$ is a complex of injective's we have an exact sequence of complexes
$$0 \rt \Hom_A(M, \E) \rt \Hom_A(M, \C) \rt \Hom_A(M, \Kc)[1] \rt 0.$$
We note that $\C$ is acyclic bounded below co-chain complex and that $\C^n$ is a direct sum of an injective $A$-module and some copies of $N$. So $\Ext^i_A(M, \C^n) = 0$ for all $i > 0$ and for all $n \in \Z$. So by \ref{d-ex}
the complex $\Hom_A(M, \C)$ is also acyclic. It follows that
\[
V^i(\bu, M, N) = H^i(\Hom(M, \E)) \cong H^i(\Hom(M, \Kc)) \quad \text{for all} \ i \geq 0.
\]
We note that
\begin{align*}
   \Hom_A(M, \Kc)&= \Hom(M, \Hom(\K(\bu), N)) \\
   &\cong \Hom(M \otimes \K(\bu), N)   \\
   &\cong \Hom(\K(\bu), \Hom(M, N)).
\end{align*}
The result follows.
\end{proof}
We now give
\begin{proof}[Proof of Theorem \ref{perfect}, second part]
By \ref{p-gor} we get $$\Hom_A(M, H^j(\bu)) \cong V^j(\bu, M, A).$$  As $M$ is MCM $A$-module and as $A$ is Gorenstein we have $\Ext^i_A(M, A) = 0$ for $i > 0$. So by \ref{vext} we get
 $V^j(\bu, M, A) \cong H^j(\bu, M^*)$. It follows that \\  $\Hom_A(M, H^j(\bu) ) \cong H^j(\bu, M^*)$.
 Replacing $M$ by $M^*$ yields the result.
\end{proof}

\section{Proof of Theorems \ref{all-one} and \ref{growth}}
We first give
\begin{proof}[Proof of Theorem \ref{all-one}]
We first consider the case when $\projdim_A M$ is finite and $\projdim_A H_j(\bu, M) < \infty$ for $j \neq i$.
As $\projdim_A M$ is finite, we get $W_i(\emptyset, M, k) = 0$ for $i \gg 0$. It follows from \ref{prop-W} that $W_i(\bu, M, k) = 0$ for $i \gg 0$ (say $i \geq s$).
Suppose $H_j(\bu, M)$ has finite projective dimension (over $A$) for all $j \neq i$. Recall we have a convergent spectral sequence
$$ E^{p,q}_2 = \Tor^A_p(H_q(I, M), k) \Rightarrow W_{p+q}(\bu, M, k).$$
For $p \geq d +1$ and $q \neq i$, note $E^2_{p,q} = 0$.

Assume $i + a = m = \mu(I)$. Let $p \geq d + 3 + m + s$. We first note that we have a sequence
\[
0 = E^2_{p+2, i -1} \rt E^2_{p,i} \rt E^2_{p-2, i+1} = 0.
\]
So we have $E^3_{p,i} = E^2_{p,i}$. Similarly we have
\[
0 = E^3_{p+3, i -2} \rt E^3_{p,i} \rt E^3_{p-3, i+2} = 0.
\]
So we have $E^4_{p,i} = E^3_{p,i} = E^2_{p,i}$. Iterating this procedure and noting that $E^r_{*, t} = 0$ for $t \geq m$ it follows that
$E^\infty_{p,i} = E^2_{p,i}$. As $W_{p+i}(\bu, M, k) = 0$ it follows that $E^2_{p,i} = 0$. So $\projdim_A H_i(\bu, M)$ is finite.

Next we consider the case when $\injdim_A M$ is finite and $\injdim_A H^j(\bu, M) < \infty$ for $j \neq i$.
As $\injdim_A M$ is finite we get $V^i(\emptyset, k, M) = 0$ for $i \gg 0$. It follows from \ref{prop} that $V^i(\bu, k, M) = 0$ for $i \gg 0$, say $i \geq s$.
We have a convergent spectral sequence
$$ E^{p,q}_2 = \Ext_A^p(k,  H^q(\bu, M)) \Rightarrow V^{p+q}(\bu, k, M).$$
For $p \geq d + 1$ and $q \neq i$ we have $E^{p,q}_2 = 0$.

Assume $i + a = m = \mu(I)$. Let $p \geq d + 3 + m + s$. Then by an argument similar to above we can show $E^{p,i}_\infty = E^2_{p,i}$. As $V^{p+q}(\bu, k, M) = 0$ it follows that $E^2_{p.i} = 0$. So $\injdim_A H^i(\bu, M) $ is finite.
\end{proof}
Next we give
\begin{proof}[Proof of Theorem \ref{growth}]
First assume $\projdim H_i(\bu, M)$ is finite for all $i$ where $\bu = u_1, \ldots, u_m$.
We have a convergent spectral sequence
$$ E^{p,q}_2 = \Tor^A_p(H_q(\bu, M), k) \Rightarrow W_{p+q}(\bu, M, k).$$
Assume $p +q \geq d + m + 2$. If $ p \geq d +1$ we get $E^{p,q}_2 = 0$ (as $\projdim_A H_i(\bu, M)$ is finite for all $i$). If $p \leq d$ then $q \geq m + 2$. So $H_q(I, M) = 0$. In particular $E_2^{p, q} = 0$. It follows that $W_n(\bu, M, k) = 0$ for $n \geq d + m + 2$. It follows from  \ref{prop-W} that $W_n(\emptyset, M, k) = 0$ for $n \geq d + m + 2$.
Thus $\projdim_A M$ is finite.

Next assume $\injdim_A H^j(\bu, M)$ is finite for all $j$. We have a convergent spectral sequence
$$ E^{p,q}_2 = \Ext_A^p(k,  H^q(\bu, M)) \Rightarrow V^{p+q}(\bu, k, M).$$
Assume $p + q \geq d + m +2$. If $p \geq d +1$ then $E^{p,q}_2 = 0$ as $\injdim_A H^j(\bu, M) $ is finite. If $p \leq d$ then $q \geq m + 2$. So $E^2_{p,q} = 0$. It follows that
$V^n(\bu, k, M) = 0$ for $n \geq d + m +2$. It follows from \ref{prop} that $V^n(\emptyset, k, M) = 0$ for $n \geq d + m +2$. Thus $\injdim_A M$ is finite.
\end{proof}

\section{Proof of Theorems \ref{ci} and \ref{ci-m}}
In this section $A$ is a local, complete, complete intersection with algebraically closed residue field $k = A/\m$. (Our results can be proved more generally , but we consider $k$ to be algebraically closed for simplicity). Let $(Q,\n)$ be a complete regular local ring such that $A = Q/(\fb)$ where $\fb = f_1, \ldots, f_r \in \n^2$ is a $Q$-regular sequence.

\s Let $M, N$ be  $A$-modules and let $\mathbb{F}$ be a resolution of $M$. Then we can construct the Eisenbud operators $t_1, \ldots, t_r$ on $\mathbb{F}$, see \cite{Eisenbud}. This turns $\Ext_A^*(M, N) = \bigoplus_{n \geq 0}\Ext^i_A(M, N)$ into a $R = A[t_1, \ldots, t_r]$-module with $\deg t_i = 2$ for all $i$. Gulliksen  \cite[3.1]{Gulliksen} showed that $\Ext_A^*(M, N) $ is a finitely generated $R = A[t_1, \ldots, t_r]$-module.
In particular \\ $\Ext^*_A(M, k)$ is a finitely generated graded $S = k[t_1, \ldots, t_r]$-module. Let $I(M)$ be the annhilator of this module. We set
$V(M)$ to be variety in $\mathbb{P}^{r-1}(k)$ defined by $I(M)$.

We first give
\begin{proof}[Proof of Theorem \ref{ci}]
Let $a \in \cap_{j \neq i}V(H_j(I, M))^c$. Choose an MCM $A$-module $N$ with $V(N) = \{ a \}$, see \cite[2.3]{B}.

As $\projdim_A M$ is finite, we get $W_i(\emptyset, M, N) = 0$ for $i \gg 0$. It follows from \ref{prop-W} that $W_i(\bu, M, N) = 0$ for $i \gg 0$ (say $i \geq s$).

Recall we have a convergent spectral sequence
$$ E^{p,q}_2 = \Tor^A_p(H_q(I, M), N) \Rightarrow W_{p+q}(\bu, M, N).$$
As $a \notin V(H_j(I,M))$ for $j \neq i$ it follows from \cite[Theorems III, IV]{avr-b} that \\  $\Tor^A_r(H_j(I, M), N) = 0$ for $r > d$. For $p \geq d +1$ and $q \neq i$, note $E^2_{p,q} = 0$.

Assume $i + a = m = \mu(I)$. Let $p \geq d + 3 + m + s$. We first note that we have a sequence
\[
0 = E^2_{p+2, i -1} \rt E^2_{p,i} \rt E^2_{p-2, i+1} = 0.
\]
So we have $E^3_{p,i} = E^2_{p,i}$. Similarly we have
\[
0 = E^3_{p+3, i -2} \rt E^3_{p,i} \rt E^3_{p-3, i+2} = 0.
\]
So we have $E^4_{p,i} = E^3_{p,i} = E^2_{p,i}$. Iterating this procedure and noting that $E^r_{*, t} = 0$ for $t \geq m$ it follows that
$E^\infty_{p,i} = E^2_{p,i}$. As $W_{p+i}(\bu, M, N) = 0$ it follows that $E^2_{p,i} = 0$.  Thus for every $p \geq d + 3 + m + s$ we  have $ \Tor^A_p(H_i(I, M), N) = 0$.
So $a \notin V(H_i(I, M))$. The result follows.
\end{proof}
Next we give
\begin{proof}[Proof of Theorem \ref{ci-m}]
Let $a \in \cap_{j }V(H_j(I, M))^c$. Choose an MCM $A$-module $N$ with $V(N) = \{ a \}$, see \cite[2.3]{B}.

Recall we have a convergent spectral sequence
$$ E^{p,q}_2 = \Tor^A_p(H_q(I, M), N) \Rightarrow W_{p+q}(\bu, M, N).$$
As $a \notin V(H_j(I,M))$ for all $j $ it follows from \cite[Theorems III, IV]{avr-b}  that  \\ $\Tor^A_r(H_q(I, M), N) = 0$ for $r > d$.

Let $p + q \geq d + 1 + m$. When $ p \geq d +1$ we get $E^{p,q}_2 = 0$ as $\Tor^A_r(H_q(I, M), N) = 0$ for $r > d$. When $p \leq d $ then $q \geq m +1$. So $H_q(I, M) = 0$. In particular $E_2^{p,q} = 0$. It follows that for every $n = p+q \geq d + 1 + m$ we have $W_{n}(\bu, M, N) = 0$. Let $\bu^\prime = u_1, \ldots, u_{m-1}$ .  By \ref{prop-W}  it follows that for  $n  \geq d + 1 + m$ we have a surjective  map
$$W_{n}(\bu^\prime, M, N) \xrightarrow{u_m} W_{n}(\bu^\prime, M, N).$$
By Nakayama's lemma it follows that $W_{n}(\bu^\prime, M, N) = 0$. Iterating we get \\
$W_{n}(\emptyset, M, N) = 0$ for all   $n  \geq d + 1 + m$. It follows that $\Tor^A_n(M , N) = 0$ for $n \gg 0$. Therefore $a \notin V(M)$. The result follows.
\end{proof}

\section{Proof of Theorems \ref{depth}, \ref{cmd} and  \ref{pdim-depth}  }
In this section we give proofs of Theorems \ref{depth}, \ref{cmd} and \ref{pdim-depth}.
We first give
\begin{proof}[Proof of Theorem \ref{depth}]
As $A$ is \CM \ it follows that \\ $V^i(\emptyset, k, A) = \Ext_A^i(k, A) = 0$ for $i < d$. It follows from \ref{prop}  that $V^i(\bu, k, A) = 0$ for $i <  d$. Set $c  =  \min \{ \depth H^i(I) \mid g < i \leq m$. We have a convergent spectral sequence
\[
E^{p,q}_2 = \Ext^p_A(k, H^q(\bu)) \Rightarrow V^{p+q}(\bu, k, A).
\]
Suppose if possible $r = \depth H^g(\bu) < c$.
Then notice that $E^{r, g}_\infty = E^{r,g}_2 = \Ext^r_A(k, H^g(\bu)) \neq 0$. However
$r + g < c + g \leq d -g + g = d$. But this contradicts the fact that $V^i(\bu, k, A) = 0$ for $i < d$. The result follows.

Next we consider the case when $c < d -g$. By earlier result we get that $\depth H^g(I) \geq c$. Suppose if possible $\depth H^g(I) = c$. Then notice
$E^{c, g}_\infty = E^{c,g}_2 = \Ext^c_A(k, H^g(\bu)) \neq 0$. It follows that $V^{c+g}(\bu, k, A) \neq 0$. This is a contradiction as $c+g < d$.
\end{proof}
Next we give an example which shows that Theorem \ref{depth} is false if we do not assume $A$ to be  \CM.
\begin{example}\label{ex-thurs}
Let $A = k[[X, Y]]/(XY, Y^2) = k[[x, y]]$ (here $x, y$ denotes the images of $X, Y$ in $A$).
Consider $K(y, A) \colon 0 \rt A \xrightarrow{y} A \rt 0$. We have $H_0(y, A) = k[[X]]$ which has depth $1$. We note that $y \in H_1(y, A)$ and $\m$ annihilates $y$. So $\depth H_1(y, A) = 0$.
\end{example}

Next we give
\begin{proof}[Proof of Theorem \ref{cmd}]
Set $\cmd H_i(I) = c_i$. By Auslander-Buchsbaum formula it follows that $\projdim_A H_i(I) = g  + c_i$. Thus $\projdim H_i(I) \leq g + c$ for all $i < m - g$.

We take the projective resolution of $\K = \K(I)$ to be $\K$-itself. So $U^i(I, A,k)
= H^i(I, k)$. We have a convergent spectral sequence
\[
E_2^{p,q} = \Ext^p_A(H_q(I), k) \Rightarrow U^{p+q}(I, A,k).
\]
We note that $E^{p,q}_2 = 0$ if
\begin{enumerate}
\item
$q > m -g$.
\item
$p > g +c$ and $q \neq m -g$.
\end{enumerate}
 It follows that
 $$\Ext^{g + c -1}_A(H_{m-g}(I), k) = E^{g + c -1, m-g}_2 = E^{g + c -1, m-g}_\infty.$$

  If $c \geq 2$ then $g + c - 1 + m -g \geq m +1$. So $E^{g + c -1, m-g}_\infty$ which is a sub-quotient of $U^{n}(I, A,k)$ with $n \geq m +1$. The latter module is zero. So
  $\Ext^{g + c -1}_A(H_{m-g}(I), k) = 0$. Thus $\projdim_A H_{m-g}(I) \leq g + c -2$. By
  Auslander-Buchsbaum formula it follows that $\cmd H_{m-g}(I) \leq c -2$.

   If $c \leq 1$ then notice
   \[
   \Ext^{g + 1}_A(H_{m-g}(I), k) = E^{g + 1, m-g}_2 = E^{g + 1, m-g}_\infty.
   \]
   As $U^{m + 1}(I, A, k) = 0$ it follows that  $\Ext^{g + 1}_A(H_{m-g}(I), k) = 0$.
   It follows that $\projdim_A H_{m-g}(I) \leq g$. By
  Auslander-Buchsbaum formula it follows that \\ $\cmd H_{m-g}(I)  = 0$. The result follows.
\end{proof}

Finally we prove
\begin{proof}[Proof of Theorem \ref{pdim-depth}]
Let $\bx$ be a parameter ideal in $A$.

(1) We have $W_i(\emptyset, M, A/(\bx)) = \Tor^A_i(M, A/(\bx)) = 0$ for $i \geq d-g$. It follows that $W_i(u_1, M, A/(\bx)) = 0$ for $i \geq d- g +1$.
Iterating we obtain $W_i(\bu, M, A/(\bx)) = 0$ for $i \geq d - g + m$.

We have a convergent spectral sequence
\[
E^2_{p,q} = \Tor^A_p(H_q(\bu, M), A/(\bx)) \Rightarrow W_{p+q}(\bu, M, A/(\bx))
\]
As $\projdim_A A/(\bx) = d$ we obtain $E^2_{p,q} =0$ for all $p > d$. We also obtain $E^2_{p,q} = 0$ for $q > m - g$.
So we obtain
$$ E^\infty_{d, m -g} = E^2_{d, m -g} = \Tor^A_d(H_{m-g}(I, M), A/(\bx)). $$
But $W_{d+m-g}(\bu, M, A/(\bx)) = 0$. So $\Tor^A_d(H_{m-g}(I, M), A/(\bx)) = 0$. As $\bx$ is a regular sequence we obtain $H_d( \bx, H_{m-g}(I, M)) = 0$. It follows  that\\ $\depth H_{m-g}(I, M) > 0$.
\end{proof}

(2) An argument similar to (1) shows that  $W_i(\bu, M, A/(\bx)) = 0$ for $i \geq d - g + m -1$.
We have a convergent spectral sequence
\[
E^2_{p,q} = \Tor^A_p(H_q(\bu, M), A/(\bx)) \Rightarrow W_{p+q}(\bu, M, A/(\bx))
\]
As $\projdim_A A/(\bx) = d$ we obtain $E^2_{p,q} =0$ for all $p > d$. We also obtain $E^2_{p,q} = 0$ for $g > m - g$.
So we obtain
$$ E^\infty_{d-1, m -g} = E^2_{d-1, m -g} = \Tor^A_{d-1}(H_{m-g}(I, M), A/(\bx)). $$
But $W_{d+m-g -1}(\bu, M, A/(\bx)) = 0$. So $\Tor^A_{d -1}(H_{m-g}(I, M), A/(\bx)) = 0$. As $\bx$ is a regular sequence we obtain $H_{d-1}( \bx, H_{m-g}(I, M)) = 0$. It follows  that\\ $\depth H_{m-g}(I, M) \geq 2$.
\section{Proof of Theorem \ref{hom}}

To prove Theorem \ref{hom} we need the following easily proved fact.
\begin{proposition}\label{d2-ex}
Let $\C$ be a bounded above chain complex (i.e., $\C_n = 0$ for $n \ll 0$). Assume $H_*(\C)  = 0$. Suppose $M$ is an $A$-module with $\Ext^i_A( \C_n, M) = 0$ for all $i > 0$
and for all $n \in \Z$. Then $H_*(\Hom_A( \C, M)) = 0$.
\end{proposition}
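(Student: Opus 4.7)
The plan is to mimic Proposition \ref{d-ex} with the variance of $\Hom$ reversed. Write $B_n = \image(\C_{n+1} \rt \C_n)$, which by acyclicity of $\C$ coincides with $\ker(\C_n \rt \C_{n-1})$, so for each $n$ there is a short exact sequence
\[
0 \rt B_n \rt \C_n \rt B_{n-1} \rt 0
\]
splitting $\C$ into two-term pieces. Since $\C_n = 0$ for $n \ll 0$ we also have $B_n = 0$ for $n \ll 0$, which anchors an induction.

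First I would establish by induction on $n$ that $\Ext^i_A(B_n, M) = 0$ for every $i > 0$. The base case is trivial. For the inductive step, the relevant segment
\[
\Ext^i_A(\C_n, M) \rt \Ext^i_A(B_n, M) \rt \Ext^{i+1}_A(B_{n-1}, M)
\]
of the long exact $\Ext$ sequence attached to the SES above has its left term zero by hypothesis and its right term zero by the induction hypothesis, forcing the middle term to vanish.

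With this vanishing in hand, applying $\Hom_A(-, M)$ to each of the short exact sequences above yields a short exact sequence
\[
0 \rt \Hom_A(B_{n-1}, M) \rt \Hom_A(\C_n, M) \rt \Hom_A(B_n, M) \rt 0.
\]
The differential $d \colon \C_n \rt \C_{n-1}$ factors as $\C_n \twoheadrightarrow B_{n-1} \hookrightarrow \C_{n-1}$, so the dual map $\Hom_A(\C_{n-1}, M) \rt \Hom_A(\C_n, M)$ factors through $\Hom_A(B_{n-1}, M)$. Combining surjectivity of $\Hom_A(\C_{n-1}, M) \rt \Hom_A(B_{n-1}, M)$ (from the SES one degree down) with injectivity of $\Hom_A(B_{n-1}, M) \hookrightarrow \Hom_A(\C_n, M)$ identifies the image of the incoming differential at position $n$ with the submodule $\Hom_A(B_{n-1}, M) \subseteq \Hom_A(\C_n, M)$. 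On the other hand, the kernel of the outgoing differential consists of maps $\C_n \rt M$ vanishing on $B_n$, which via the same SES is again $\Hom_A(B_{n-1}, M)$. Hence image equals kernel at each position and the complex $\Hom_A(\C, M)$ is acyclic.

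The only real content is the $\Ext$-vanishing induction, and the boundedness hypothesis $\C_n = 0$ for $n \ll 0$ is used precisely to start it; the remainder is a routine diagram chase. I do not anticipate any serious obstacle.
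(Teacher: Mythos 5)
The paper states Proposition~\ref{d2-ex} without proof (it is introduced as an ``easily proved fact''), so there is no argument of record to compare against; judged on its own, your proof is correct. You break the acyclic complex $\C$ into short exact sequences $0 \rt B_n \rt \C_n \rt B_{n-1} \rt 0$ of boundaries, run an induction (anchored by $\C_n = 0$ for $n \ll 0$) using the long exact $\Ext$ sequence to conclude $\Ext^{>0}_A(B_n, M) = 0$ for all $n$, and then dualize the short exact sequences to see that at each spot the image of the incoming map and the kernel of the outgoing map are both identified with $\Hom_A(B_{n-1}, M) \hookrightarrow \Hom_A(\C_n, M)$. All the indexing and variance of $\Hom_A(-, M)$ check out, and the diagram chase at the end correctly matches image and kernel. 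This is exactly the standard argument that one would expect here, and it mirrors the companion Proposition~\ref{d-ex} with the roles of source and target reversed, as you indicate.
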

Next we give
\begin{proof}[Proof of Theorem \ref{hom}]
Let $\m = (u_1, \ldots, u_m)$. Let $\eta\colon  \Pb \rt \K(\bu, M)$ be  a projective resolution. Let $\C = \cone(\eta)$. We have an exact sequence of chain complexes
$$ 0 \rt \Pb \rt \C \rt \K(\bu, M)[-1] \rt 0.$$ As $\Ext^i(M. N) = 0$ for $i > 0$ we have an exact sequence of co-chain complexes
$$ 0 \rt \Hom(\K(\bu, M), N)[-1] \rt \Hom(\C, N) \rt \Hom(\Pb, N) \rt 0. $$
We note that $\C_i$ is zero for $i \ll 0$ and for all $i$ it consists of a direct sum of
a free module and some copies of $M$. So $\Ext^i_A(\C_n, N) = 0$ for all $i \geq 1$ and for all $n \in \Z$. It follows from \ref{d2-ex} that $\Hom_A(\C, N)$ is acyclic.
Therefore
$$U^i(\m, M, N) \cong H^i(\Hom(\K(\bu, M), N)) \cong H^i(\bu, \Hom_A(M, N)). $$
We also have a convergent spectral sequence
$$ \Ext_A^p(H_q(\m, M), N) \Rightarrow U^{p+q}(\m, M, N)  \cong H^{p+q}(\bu, \Hom_A(M, N)). $$
For $p + q \leq d -1$ then note that $p \leq d-1$. As $N$ is MCM $A$-module we get
$\Ext_A^p(H_q(\m, M), N) = 0$. It follows that   $H^i(\bu, \Hom_A(M, N)) = 0$ for $i \leq d-1$. So $\Hom_A(M, N)$ is a MCM $A$-module.
\end{proof}
We construct bountiful examples of modules satisfying the hypotheses of the theorem over complete intersections of co-dimension at least two.
\begin{example}
\label{const-mcm} Let $(Q, \n)$ be a complete regular local ring with algebraically closed residue field $k$. Let $\fb = f_1, \ldots, f_r \in \n^2$ be a regular sequence. Assume $r \geq 2$.
Set $A = Q/(\fb)$. Let $X$ be any non-empty projective variety in $\mathbb{P}^{r-1}(k)$ with $X \neq \mathbb{P}^{r-1}(k)$.  Choose $N$ MCM with support variety $V(N) = X$, see \cite[2.3]{B}. Let $a \in \mathbb{P}^{r-1}(k) \setminus X$ be any.
Choose $M$ MCM with  support variety $V(M) =\{a \}$. Note by \cite[5.6, 4.7]{avr-b} it follows that $\Ext^i_A(M, N) = 0$ for all $i > 0$ (similarly  $\Ext^i_A(\Syz^A_1(M), N) = 0$ for all $i > 0$).
If $\Hom_A(M, N) \neq 0$ then by the above result it follows that $\Hom_A(M, N)$ is a MCM $A$-module. If $\Hom_A(M, N) = 0$ then note that if $P$ is a minimal prime of $N$ then $M_P = 0$. Note $\Syz^A_1(M)_P \neq 0$. So it follows that $\Hom_A(\Syz^A_1(M), N) \neq 0$. Hence by our result it is a MCM $A$-module.
\end{example}
\section{Proof of Theorems \ref{gor-cm} and \ref{two-add}}
In this section we give proofs of Theorems \ref{gor-cm} and \ref{two-add}. We first give
\begin{proof}[Proof of Theorem \ref{gor-cm}]
 Let $I = (u_1, \ldots, u_m)$ (minimally).
 We take the projective resolution of $\K = \K(\bu)$ to be itself. So we get
 $U^i(\bu, A, A) = H^i(\bu, A)$. We also have a convergent spectral sequence
 \[
 E^{p,q}_2 = \Ext^{p}_A(H_q(\bu), A) \Rightarrow H^{p+q}(\bu, A).
 \]
 We note that $H_q(\bu)$ are of dimension $d - g$ and $I \subseteq \ann H_i(\bu)$ for all
 $i$. Notice $E_2^{p,q} = 0$ for all $p < g$. Furthermore for $p > g$ and $q \leq r -1$ we get $E_2^{p,q} = 0$. It follows that
 \[
 E^{g,r}_\infty = E^{g,r}_2 =  \Ext^{g}_A(H_r(\bu), A).
 \]
 We consider $E^{p,q}_2 $ when $p + q = g + r$. When $p > g$ then $q < r$. So $E_2^{p,q} = 0$. If $p <g $ then also $E_2^{p,q} = 0$ as $I \subseteq \ann H_i(\bu)$.
 It follows that
 \[
 \Ext^{g}_A(H_r(\bu), A) = U^{g + r}(\bu, A, A) = H^{g + r}(\bu, A).
 \]
 The result follows.
 \end{proof}
 Next we give
 \begin{proof}[Proof of Theorem \ref{two-add}]
 Let $I = (u_1, \ldots, u_m)$ (minimally).
 We note that \\  $V^i(\emptyset, k, A) = 0 $ for $i < d$. It follows from \ref{prop} that
 $V^i(\bu, k, A) = 0 $ for $i < d$.
As $A$ is Gorenstein note that $H_2(\bu)$ is also \CM. Note $\dim H_i(\bu) = d - g$ for $0 \leq i \leq 2$. We also have a convergent spectral sequence
\[
 E^{p,q}_2 = \Ext^{p}_A(k, H^q(\bu)) \Rightarrow V^{p+q}(\bu,k, A).
\]
We note that $ E^{p,q}_2 =  0$ for $p < d -g$ and $q = g, g+2$. Also  $ E^{p,q}_2 =  0$ for $q \notin \{ g, g+1, g+2 \}$.
It follows that for $p < d -g-2$
\[
 E^{p,g+1}_\infty =   E^{p,g + 1}_2 =  \Ext^{p}_A(k, H^{g+1}(\bu)).
\]
As $r = p + g +1 < d -1$ we get $V^r(\bu, k, A) = 0 $. So $\Ext^{p}_A(k, H^{g+1}(\bu)) = 0$ for $p < d -g-2$. The result follows.
\end{proof}
Next we give
\begin{proof}[Proof of Corollary \ref{gul}]
Suppose if possible $\injdim_{A/I} H_{l-g-1}(I) $ is finite.
By Theorem \ref{gor-cm} we have
$$H^{g+1}(I)  \cong \Ext_A^g(H_1(I), A).$$
As $H_1(I)$ is \CM \ it follows that $H^{g+1}(I)$ is \CM. Furthermore $\dim H^{g+1}(I)  = \dim A/I$. It follows that $H^{g+1}(I) = \omega_{A/I}^r$ for some $r \geq 1$ (here $ \omega_{A/I}$
denotes the canonical module of $A/I$). By duality we have
$$H_1(I) = \Ext^{g}_A(\omega_{A/I}^r, A) = (A/I)^r.$$
So $H_1(I)$ is a non-zero free $A/I$-module. This contradicts Gulliksen's result. The result follows.
\end{proof}

\end{document}